\documentclass[12pt]{amsart}

\usepackage{amssymb}

\usepackage[shortlabels]{enumitem}
\setlist[enumerate]{leftmargin=*,font=\upshape,align=parleft,label=(\alph*)}
\setlist[itemize]{leftmargin=*}

\usepackage[alphabetic, nobysame]{amsrefs}

\usepackage{hyperref}
\hypersetup{
	pdfstartview={XYZ null null 1.00}, 
	pdfpagemode=UseNone, 
	colorlinks,
	breaklinks, 
	linkcolor=blue,
	urlcolor=blue, 
	anchorcolor=blue,
	citecolor=blue
}

\usepackage[capitalise]{cleveref}

\usepackage{geometry}
\geometry{margin = 1in}

\usepackage{xspace}

\usepackage{graphicx}
\theoremstyle{plain}

\newtheorem{lemma}[equation]{Lemma}

\crefname{prop}{Proposition}{Propositions}
\newtheorem{prop}[equation]{Proposition}

\newtheorem{theorem}[equation]{Theorem}

\crefname{obs}{Observation}{Observations}

\crefname{cor}{Corollary}{Corollaries}
\newtheorem{cor}[equation]{Corollary}

\theoremstyle{definition}

\crefname{defn}{Definition}{Definitions}
\newtheorem{defn}[equation]{Definition}

\crefname{example}{Example}{Examples}
\newtheorem{example}[equation]{Example}

\crefname{question}{Question}{Questions}
\newtheorem{question}[equation]{Question}

\crefname{problem}{Problem}{Problems}

\newtheorem*{ECS_problem}{Maximum $k$-edge colorable subgraph problem ($k$-ECS)}

\crefname{conj}{Conjecture}{Conjectures}

\theoremstyle{remark}

\crefname{remark}{Remark}{Remarks}
\newtheorem{remark}[equation]{Remark}

\crefname{claim+}{Claim}{Claims}
\newtheorem{claim+}[equation]{Claim}

\newcommand{\N}{\mathbb{N}}
\newcommand{\Z}{\mathbb{Z}}

\renewcommand{\phi}{\varphi}

\newcommand*{\defeq}{\mathrel{\vcenter{\baselineskip0.5ex \lineskiplimit0pt \hbox{\scriptsize.}\hbox{\scriptsize.}}}=}

\newcommand{\set}[1]{\left\{ #1 \right\}}

\newcommand{\p}{p}
\newcommand{\ep}{ep}
\newcommand{\epp}{ep^*}


\pagestyle{plain}


\title{Characterization of saturated graphs related to pairs of disjoint matchings}

\author{Zhengda Mo}
\author{Sam Qunell}

\author{Anush Tserunyan}
\author{Jenna Zomback}

\thanks{This work is part of the research project ``Pairs of disjoint matchings'' within \href{https://math.illinois.edu/research/igl}{Illinois Geometry Lab} in Spring 2019 -- Spring 2020. The first and second authors participated as undergraduate scholars, the fourth author served as graduate student team leader, and the third author as faculty mentor. The third author was supported by the NSF Grant DMS-1855648.}




\newcommand{\skeleton}{skeleton \xspace}

\begin{document}

\maketitle
\begin{abstract}
    For a finite graph $G$, we study the maximum $2$-edge colorable subgraph problem and a related ratio $\frac{\mu(G)}{\nu(G)}$, where $\nu(G)$ is the matching number of $G$, and $\mu(G)$ is the size of the largest matching in any pair $(H,H')$ of disjoint matchings maximizing $|H| + |H'|$ (equivalently, forming a maximum $2$-edge colorable subgraph). Previously, it was shown that $\frac{4}{5} \le \frac{\mu(G)}{\nu(G)} \le 1$, and the class of graphs achieving $\frac{4}{5}$ was completely characterized. In this paper, we first show that graph decompositions into paths and even cycles provide a new way to study these parameters. We then use this technique to characterize the graphs achieving $\frac{\mu(G)}{\nu(G)} = 1$ among all graphs that can be covered by a certain choice of a maximum matching and $H$, $H'$ as above.
    

\end{abstract}

\section{Introduction}\label{sec:intro}

We investigate the ratio of two parameters of a finite graph $G$, $\frac{\mu(G)}{\nu(G)}$, where $\nu(G)$ is the size of a maximum matching in $G$, and $\mu(G)$ is the size of the largest matching in a pair of disjoint matchings whose union is as large as possible. More formally,
\begin{align*}
\lambda(G) 
\defeq& 
\max\set{|H|+|H'| : \text{$H$ and $H'$ are disjoint matchings in $G$}},
\\
\mu(G)
\defeq&
\max\set{|H|:\text{$H$ and $H'$ are disjoint } \text{matchings in $G$ with }|H|+|H'|=\lambda(G)}.
\end{align*}

To place the mentioned parameters $\lambda,\mu$ and the ratio $\frac{\mu}{\nu}$ in the context of what is commonly considered in graph theory, recall that for $k \in \N$, a proper edge-coloring of a graph $G$ with $k$ colors is exactly a partition of the edge-set into $k$ disjoint matchings. Thus, determining $\lambda$ is the $k=2$ case of the following well-known problem: 

\begin{ECS_problem}\label{problem:k-covering}
For a given $k \in \N$, what is the maximum number $\nu_k$ of edges of a given finite graph $G$ that can be covered with $k$ disjoint matchings?
\end{ECS_problem}

We note here that $\nu = \nu_1$ and $\lambda = \nu_2$, so the $k=1$ case of this problem is the maximum matching problem, while the $k=2$ case is what we are concerned with here. Before describing our result, we overview what is known about $k$-ECS concerning its computational complexity, numerical bounds, and structural aspects.

\subsection*{Computational complexity}

The simple case of $1$-ECS is completely solved as it admits a polynomial-time algorithm \cites{Edmonds:1965}. 
On the other hand, $2$-ECS is already $NP$-hard; in fact, the problem of determining whether $\lambda(G)$ is equal to the number of vertices of the graph is $NP$-complete even for cubic graphs. This follows immediately from Holyer's work \cite{Holyer} by the following argument. Holyer proves that determining whether a graph is $3$-edge colorable is $NP$-complete even for cubic graphs. Letting $G \defeq (V,E)$ be a cubic graph, any $3$-edge-coloring gives $3$ disjoint perfect matchings, in particular, $\lambda=|V|$. Conversely, if $\lambda=|V|$, then for any pair $(H,H')$ of disjoint matchings with $|H|+|H'|=\lambda$, $H$ and $H'$ are both perfect matchings, and the remaining edges $H''$ in $E$ form a third perfect matching, so $H$, $H'$, $H''$ is a 3-edge-coloring of $G$. Thus, a cubic graph is $3$-edge colorable if and only if $\lambda=|V|$.

To give an intuitive idea why finding $\lambda(G)$ is hard, we illustrate how the greedy algorithm fails to find it. The algorithm would firstly take a maximum matching of the whole graph and then take a maximum matching of the remaining edges. However, this may not cover the maximum number of edges $\lambda(G)$: see \cref{fig:spanner} for an example where $\lambda(G) = 4+4 = 8$, while the greedy algorithm yields $5+2 = 7$. The ratio $\frac{\mu(G)}{\nu(G)}$ that we study in the present paper measures the failure of optimality of the greedy algorithm for $2$-ECS; in particular, this ratio is $\frac{4}{5}$ for the graph in \cref{fig:spanner}.

\begin{figure}[h]
    \centering
    \includegraphics[width=0.5\textwidth]{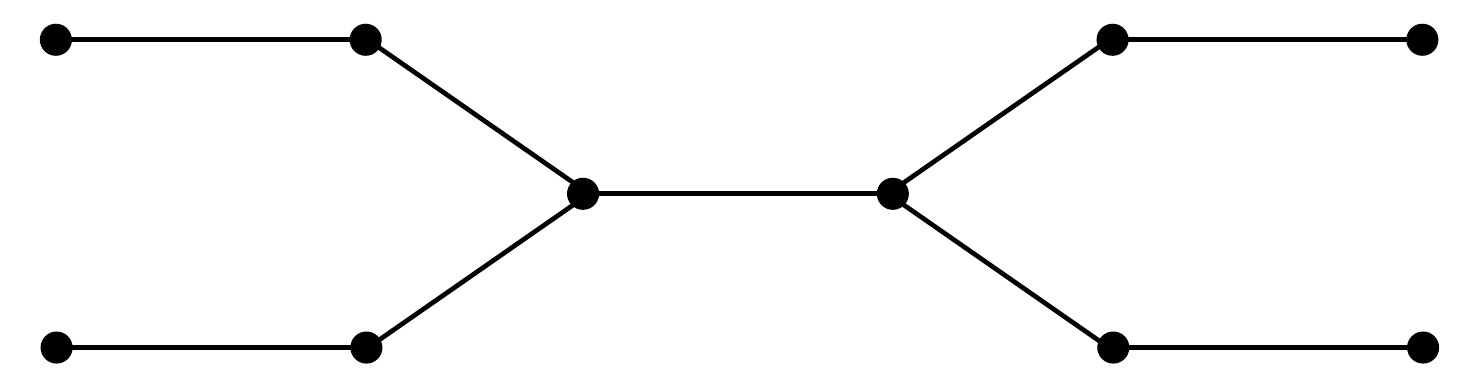}
    \caption{The spanner graph}
    \label{fig:spanner}
\end{figure}

In the positive direction for computing $\lambda(G)$, it is shown in \cite{Agrawal+} that finding $\lambda(G)$ is fixed-parameter tractable.

More generally for all $k \ge 2$, it is known that $k$-ECS is $NP$-hard \cite{Feige+} and APX-hard \cite{Kosowski:2009}. Related complexity bounds are shown in \cite{A-M:2019}.

\medskip

Although finding $\lambda$ is $NP$-hard, the complexity of finding $\mu(G)$ is still unknown.

\begin{question}
What is the computational complexity of finding $\mu(G)$? In particular, is it also $NP$-hard?
\end{question}

\begin{remark}
One may wonder whether there is an analogue for the Berge--Tutte formula \cites{Berge,Tutte} \cite{Lovasz-Plummer}[Theorem 3.1.14], which relates $\nu(G)$ to the number of odd connected components in subgraphs of $G$, for $\lambda$. However, that formula provides a witness to $\nu(G) \le k$ for a fixed $k$, hence implying that finding $\nu(G)$ is both in $NP$ and co-$NP$. Thus, since we already know that finding $\lambda(G)$ ($2$-ECS) is $NP$-hard, it is unlikely that there is an analogue of the Berge--Tutte formula for $\lambda(G)$ in general. However, such a formula may exist for subclasses of graphs.
\end{remark}

\subsection*{Numerical bounds}

In \cite{M-M-Ts:2008}, V. Mkrtchyan, V. Musoyan, and A. Tserunyan proved that
\[ 
\frac{4}{5} \leq \frac{\mu(G)}{\nu(G)} \leq 1 
\] 
for any graph $G$, and more recently, in \cite{IGL:2019}, it was shown that for any rational $m,n \in \N$ with $\frac{4}{5} \leq \frac{m}{n} \leq 1$, there is a connected graph $G$ with $\mu(G)=m$ and $\nu(G)=n$ (an explicit construction is provided).


As shown in \cite{A-M-P-V:2014}, this lower bound can be improved for the case of cubic graphs. In this case, we have \[ 
\frac{8}{9} \leq \frac{\mu(G)}{\nu(G)} \leq 1 
\] for a cubic graph $G$. 

A related line of research studies the general $\nu_k$, which is the most edges that can be colored by $k$ disjoint matchings (i.e. $\nu_2(G)=\lambda(G)$). In this direction, \cite{M-P-V:2010} shows that in cubic graphs,
\begin{align*}
    \lambda(G) &\ge \frac{4}{5} | V(G) | \text{ and}
    \\
    \lambda(G) &\le \frac{|V(G)| + 2\nu_3(G)}{4}.
\end{align*}

\noindent Lastly, \cite{K-M:2019} establishes the following family of inequalities for a bipartite graph $G$ and any $0\leq i \leq k$:
\[\nu_k\geq \frac{\nu_{k-i}+\nu_{k+i}}{2}.\]

\subsection*{Structural aspects and our result}

Besides the relevance and naturalness of $k$-ECS problem, our motivation for studying the parameters $\lambda$ and $\mu$, and the ratio $\frac{\mu}{\nu}$ is that they reveal interesting structural properties of the graph. Furthermore, we observe in \cref{sec:pec} below that the parameters $\lambda$, $\mu$, and $\nu$ are tied to decompositions of graphs into paths and even cycles, which provides a dual angle of interest for studying these parameters. 

It was shown in \cite{Tserunyan:2009} that the graphs minimizing the ratio $\frac{\mu}{\nu}$ (i.e. $\frac{\mu(G)}{\nu(G)} = \frac{4}{5}$) admit rigid structure; in fact, this paper provides a complete structural characterization of these graphs. \cref{fig:spanner} depicts the \textit{spanner} --- the unique minimal graph that achieves the ratio $\frac{4}{5}$. 

As for the graphs achieving the upper bound $1$ for the ratio $\frac{\mu}{\nu}$, the question is still open:

\begin{question}\label{q:ratio=1}
Is there a structural characterization of all graphs $G$ with $\frac{\mu(G)}{\nu(G)} = 1$?
\end{question}

This question has been considered previously. For example, Mkrtchyan in \cite{Mkrtchyan} shows that trees whose every edge is in a maximum matching satisfy $\frac{\mu(G)}{\nu(G)}=1$. A more general sufficient condition for achieving the upper bound $1$ is provided in \cite{IGL:2019}, where is it proven that any graph $G$ with $\frac{\mu(G)}{\nu(G)}<1$ must contain one of a special family of graphs as a subgraph. 

In the present paper, we answer \cref{q:ratio=1} for a core (with respect to the involved parameters) subclass of graphs, providing a complete structural characterization. To describe this class, we define a choice of \emph{maximally intersecting} matchings $M,H$, and $H'$ in \cref{maximal}, and we call a graph \textit{saturated} if its edges are covered by some maximally intersecting matchings $M$, $H$, and $H'$, where $M$ is a maximum matching and $H,H'$ are disjoint matchings with $|H|+|H'| = \lambda(G)$ and $|H| = \mu(G)$. These maximally intersecting matchings feature the core structure of graphs with $\frac{\mu(G)}{\nu(G)} < 1$, as the edges not included in these matchings can appear in limited configurations and do not contribute to the parameters $\mu(G)$ or $\nu(G)$. Among saturated graphs, we precisely characterize those with $\frac{\mu(G)}{\nu(G)}<1$ by equating this class with that of \textit{skeletons}. Referring the reader to \cref{defn:skeleton} for the precise definition, we roughly define a \textit{$k$-skeleton} here as a disjoint union of $k$-many odd leaf-to-leaf paths with some additional special odd paths intertwining them. Our main result is:

\begin{theorem}\label{label:char_via_skeletons}
    If a finite, connected graph $G$ is saturated and satisfies $\frac{\mu(G)}{\nu(G)}<1$, then it is a $k$-skeleton with $k=\nu(G)-\mu(G)$. Furthermore, if $G$ is a $k$-skeleton, then $G$ is saturated, and $\nu(G)-\mu(G)=k$. In particular, $\frac{\mu(G)}{\nu(G)}<1$ for all $k$-skeletons.
\end{theorem}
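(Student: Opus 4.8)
The plan is to prove the two directions of this characterization separately, using the decomposition of the relevant matchings into paths and even cycles as the central tool throughout. Observe first that the third assertion---that $\frac{\mu(G)}{\nu(G)} < 1$ for every $k$-skeleton---is an immediate consequence of the second: once we know $\nu(G) - \mu(G) = k \ge 1$, we get $\mu(G) < \nu(G)$. Thus the real content lies in the equivalence between saturation (with ratio $< 1$) and being a skeleton, together with the identity $k = \nu(G) - \mu(G)$.

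For the reverse direction, suppose $G$ is a $k$-skeleton, and work directly from the explicit combinatorial description in \cref{defn:skeleton} as a union of $k$ odd leaf-to-leaf paths together with the intertwining special odd paths. On such an explicit structure one can write down candidate matchings $M$, $H$, $H'$ by hand---essentially by taking alternate edges along each constituent path---and then verify in turn that $M$ is a maximum matching, that $|H| + |H'| = \lambda(G)$ with $|H| = \mu(G)$, that $M, H, H'$ are maximally intersecting in the sense of \cref{maximal}, and that together they cover every edge, so that $G$ is saturated. Computing $\nu(G)$ and $\mu(G)$ then reduces to counting the edges of each parity class along the paths, and should yield $\nu(G) - \mu(G) = k$ precisely because each of the $k$ odd leaf-to-leaf paths contributes exactly one unit of excess of $M$ over $H$.

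For the forward direction, assume $G$ is saturated with $\mu(G) < \nu(G)$, so that $G = M \cup H \cup H'$ for maximally intersecting matchings with $|M| = \nu(G)$ and $|H| = \mu(G)$. The key structural observation is to examine the symmetric difference $M \triangle H$, which is a disjoint union of paths and even cycles whose edges alternate between $M$ and $H$. Since $M$ is a maximum matching it admits no augmenting path, so no odd component can have more $H$-edges than $M$-edges; consequently every odd path carries exactly one more $M$-edge than $H$-edge, and there are therefore precisely $|M| - |H| = \nu(G) - \mu(G) = k$ such odd paths, each with both endpoints unmatched by $H$. These $k$ odd leaf-to-leaf paths are exactly the backbone of the skeleton, and this is where the equation $k = \nu(G) - \mu(G)$ is born.

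It remains to show that the remaining edges---the $H'$-edges together with the even components of $M \triangle H$---assemble into precisely the intertwining special odd paths of a $k$-skeleton, and I expect this to be the main obstacle. The argument must exploit both the maximality of $\lambda(G) = |H| + |H'|$ and the maximal-intersection condition to rule out stray even cycles and to force each non-backbone component to attach to the backbone in the prescribed path-like manner. Concretely, I would analyze the components of $H \cup H'$ (again paths and even cycles, now alternating in $H$ and $H'$) and track how they interlace with the $k$ augmenting paths of $M \triangle H$; any configuration violating the skeleton pattern ought to produce either a larger pair $(H, H')$, contradicting the maximality of $\lambda(G)$, or a pair with strictly larger intersection, contradicting maximal intersection. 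Systematically enumerating and eliminating these bad configurations, and then verifying that what survives matches \cref{defn:skeleton} on the nose, is the delicate part of the proof.
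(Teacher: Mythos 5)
There is a genuine gap --- in fact two, one in each direction, and both sit exactly where the paper has to do its real work. In the forward direction, your identification of the backbone (the odd components of $M \triangle H$, of which there are exactly $\nu(G)-\mu(G)$) agrees with the paper's choice of $G'$ as the union of nontrivial maximal $M$-$H$ alternating paths, but three things are missing. First, maximality of $M$ alone does not rule out even paths or even cycles in $M \triangle H$; the paper needs the maximal-intersection conditions of \cref{maximal} for this (Lemma \labelcref{MH-chains_odd}: an even cycle or an $H$-end-edge lets you swap and increase $|M \cap H|$). Second, endpoints being $H$-exposed does not make the backbone paths leaf-to-leaf in $G$; that requires the covering hypothesis $E = M \cup H \cup H'$, which you never invoke at this point. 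Third, and most seriously, you explicitly defer the verification of essentially all of the skeleton conditions \labelcref{defn:skeleton:3-vertices}--\labelcref{defn:nobadcycles} to a hoped-for enumeration of ``bad configurations''; that enumeration \emph{is} the proof of \cref{theorem:khasskele}, so what you have is a plan, not an argument.

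In the reverse direction the gap is conceptual, not just one of detail: you claim that after writing down $M$, $H$, $H'$ by alternating edges along the paths, the facts $|H|+|H'| = \lambda(G)$ and $|H| = \mu(G)$ ``reduce to counting the edges of each parity class along the paths.'' They do not --- these are global optimality statements over \emph{all} pairs of disjoint matchings in $G$, and no local parity count can certify them. The paper's route is: show every vertex is $(H,H')$-saturated and use the pec machinery (\cref{lambda_via_p}) to get $(H,H') \in \Lambda(G)$; prove the pec decomposition $H \cup H'$ is the \emph{unique} one achieving $p(G)$ by a contradiction argument that constructs an $M, M^c$ alternating cycle forbidden by condition \labelcref{defn:nobadcycles}; and finally use condition \labelcref{defn:skeleton:end-vertices-G'} to force the end-edges of odd maximal $H$-$H'$ paths into $H$, which is what pins down $|H| = \mu(G)$. (The same condition \labelcref{defn:nobadcycles} also underlies \cref{prop:uniquem}, the uniqueness of the perfect matching.) Your proposal never says where skeleton conditions \labelcref{defn:skeleton:end-vertices-G'} and \labelcref{defn:nobadcycles} enter, yet they exist precisely to exclude better matchings; any argument that does not use them cannot close, because without them the claimed equalities are false.
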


The precise statement of this result is given in \cref{theorem:skelehask,theorem:khasskele}.

In the proof of \cref{label:char_via_skeletons}, we use an alternative characterization of the parameters $\nu$, $\lambda$, $\mu$, which is interesting in its own right. This characterization replaces the maximization on matchings used in our definitions to minimization on decompositions into paths and even cycles, which we call \textit{PEC decompositions} (see \cref{sec:pec}).

As for unsaturated graphs, i.e. those with edges not covered with the mentioned three matchings, we provide in \cref{sec:extra-edges} a couple of structural restrictions on those extra edges and state a question. However, \cref{q:ratio=1} remains open in general.

\subsubsection*{Organization} 
\cref{sec:prelims} provides the preliminaries and sets up notation. PEC decompositions are discussed in \cref{sec:pec}. In \cref{sec:tools}, we establish the basic theory of the maximally intersecting matchings studied in this paper and the alternating paths that they form. Our main results, \cref{theorem:skelehask} and \cref{theorem:khasskele}, are proved in \cref{sec:ratio<1} using the techniques from the previous sections. We conclude in \cref{sec:extra-edges} with a discussion of those graphs that are not covered by these matchings.
 
\subsubsection*{Acknowledgements}
We thank Vahan Mkrtchyan for his invaluable insight into this direction of research and for pointing out the relevant literature and context. We also thank Alexandr Kostochka for helpful suggestions and advice.


\section{Definitions \& notation}\label{sec:prelims}

Throughout, we denote by $G=(V,E)$ a finite graph with vertex set $V$ and edge set $E$.
A \textbf{matching} in $G$ is a set of edges such that no two are adjacent. In particular, we will consider the following specific types of matchings.

\begin{itemize}
\item A \textbf{maximum matching} is a matching $ M $ of maximum size, i.e., for all matchings $ M' $, $ |M'| \leq |M| $. We let $\boldsymbol{\nu(G)}$ denote the size of a maximum matching.
\item A \textbf{perfect matching} is a matching $M$ such that for every vertex $v$ in $G$, there exists an $e\in M$ such that $v$ is incident to $e$.
\item A \textbf{near perfect matching} is a matching $M$ such that for every vertex $v$ in $G$ except for one, there exists an $e\in M$ such that $v$ is incident to $e$.
\end{itemize}

In addition to sizes of maximum matchings, we will also pay special attention to the size of the union of two disjoint matchings whose union is of maximum size. More formally:

\begin{itemize}
\item $\boldsymbol{\lambda(G)} := \max\{|H|+|H'|:(H,H') \text{ are disjoint matchings in } G\}$;

\item $\boldsymbol{\Lambda(G)}$ is the set of pairs $ (H, H') $ of disjoint matchings satisfying $|H| + |H'| = \lambda(G)$;

\item $\boldsymbol{\mu(G)} := \max\{|H| : \exists H' \text{ such that } (H, H') \in \Lambda(G)\}$.
\end{itemize}

$\nu$, $\lambda$, and $\mu$ are the main concern of this paper. We present an example with the spanner graph to illustrate our key parameters. The left spanner of \cref{fig:spanners} has the unique maximum matching, $M$, highlighted. It has size 5, and so $\nu(G)=5$ for the spanner. The largest matching disjoint from $M$ has size 2, for combined size of 7. However, if we choose a separate set of disjoint matchings, we can achieve $\lambda(G)=8$. On the right of the same figure, our maximal disjoint matchings $H$ and $H'$ are highlighted. They each have size $4$, so $\mu(G)=4$ for the spanner. Most graphs with $\frac{\mu(G)}{\nu(G)}<1$ share key structural properties with the spanner, as we will make clear in Section 5.
\begin{figure}[h]
    \centering
    \includegraphics[width=0.75\textwidth]{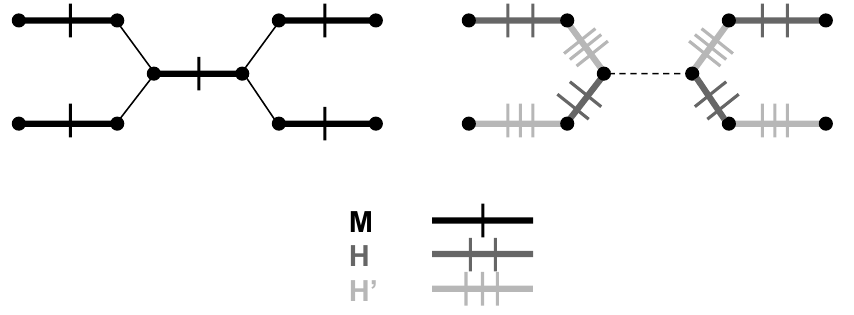}
     \caption{In the spanner, $\nu(G)=5$ but $\mu(G)=4$.}
    \label{fig:spanners}
\end{figure}

\begin{itemize}
\item $\boldsymbol{\Lambda_\mu(G)}$ is the subset of $\Lambda(G)$ of pairs $(H,H')$ with $|H| = \mu(G)$.
\item $\boldsymbol{\mu'(G)} : = \lambda(G) - \mu(G)$.

\item By a \textbf{chain} in $G$, we mean a sequence $(v_0, e_1, v_1, e_2, \dots, e_\ell, v_\ell)$, $\ell \in \N$, where the $v_i$ are vertices and the $e_j$ are edges of $G$ such $e_i = (v_{i-1},v_i)$ for all $i \le \ell$; we also assume that our chains are simple, i.e. all edges are distinct. We refer to $\ell$ as the length of the chain and to $v_0$ and $v_\ell$ as its endpoints.

\item By an \textbf{odd} (resp. \textbf{even}) chain we mean that the length (i.e., the number of edges on it) is odd (resp. even).

\item By a \textbf{path}, we mean a simple path, i.e. a chain with distinct vertices, and by a \textbf{cycle} we mean a simple cycle, i.e. a chain whose endpoints are equal but all other vertices are distinct.

\item An edge $e_i$ on an odd path $v_0 e_1 v_1 e_2 \dots e_{k+1} v_{2k+1}$ is called \textbf{even} (resp. \textbf{odd}) if $i$ is even (resp. odd). Note that this would not change if the path was written in the reverse order.

\item If $A, B\subseteq E$, then an \textbf{$A - B$ alternating chain} is a chain such that edges of odd index are in $A\setminus B$ and edges of even index are in $B \setminus A$, or vice versa. Note that if $A$ and $B$ are both matchings, then the chain must be either a path or a cycle, or else some vertex will be incident to two edges of the same matching.

\item If $A\subset E$ is a subset of the edges of the graph $G$, then $\boldsymbol{A^c}$:= $E \setminus A$ is the complement in $E$ of $A$.

\end{itemize}

\noindent The majority of the techniques used in \cref{sec:ratio<1} make use of the properties of the alternating paths of various matchings.

\section{PEC decompositions}\label{sec:pec}
   Now, we present a type of decomposition for graphs that highlights matching structure and gives an alternative way to compute the parameters $\nu(G)$, $\lambda(G)$, and $\mu(G)$. Several of the following lemmas will be used in the proof of our main theorems. Throughout this section, let $G := (V,E)$ be a graph and $n := |V|$.
    \begin{itemize}

	\item A \textbf{subgraph} of $G$ is a graph $G' := (V', E')$ with $V' \subseteq V$ and $E' \subseteq E$. A subgraph $G' := (V',E')$ of $G$ is \emph{spanning} if $V' = V$.
    \item A \textbf{pec decomposition} of $G$ is a spanning subgraph $G' := (V,E')$, each of whose (connected) components is a path or an even cycle. (Isolated vertices are treated as even paths.)
     \item If $A$ is a subset of $E$, then we use the notation $\boldsymbol{G_{A}}:=(V,A)$ to denote the spanning subgraph of $G$ with edge set $A$. In particular, we use this notation in the case that $A$ is the union of certain matchings.
    \end{itemize}
    For a pec decomposition $G'$ of $G$, let 
    
    \begin{itemize}
    \item $\boldsymbol{\p(G')}$ (resp., $\boldsymbol{\ep(G')}$) denote the number of components of $G'$ that are paths (resp., even paths);

    \item $\boldsymbol{\p(G)} := \min\{\p(G') : \text{$G'$ is a pec decomposition of }  G\}$;
	
    \item $\boldsymbol{\ep(G)} := \min\{\ep(G') : \text{$G'$ is a pec decomposition of } G\} $.
    \end{itemize}
These last three parameters describe structural properties of the graph that can be assessed with matchings and alternating paths. They are a direct parallel to our original $\nu, 
\mu$, and $\lambda$. For our first lemma, we show that $\nu(G)$ and $\ep(G)$ are directly related.
\begin{lemma}\label{max_in_pec}
	For any pec decomposition $G'$ of $G$, $\nu(G') = \frac{n - \ep(G')}{2}$.
\end{lemma}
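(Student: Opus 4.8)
The plan is to analyze the structure of a pec decomposition $G'$ directly, component by component, and show that a maximum matching of $G'$ leaves exactly the even paths with an unmatched vertex each. Since $G'$ is a disjoint union of paths and even cycles spanning all $n$ vertices, the matching number $\nu(G')$ is simply the sum of the matching numbers of its components, and likewise $n$ is the sum of the vertex counts over components. So it suffices to establish the identity one component at a time and then add.

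First I would record the per-component matching numbers. A path with $\ell$ edges has $\ell+1$ vertices and matching number $\lceil \ell/2 \rceil$; an even cycle with $\ell$ edges has $\ell$ vertices and a perfect matching, so matching number $\ell/2$. The key bookkeeping observation is to express each component's matching number in the form $\frac{(\text{vertices in that component}) - c}{2}$, where $c$ counts the vertices left unmatched by a maximum matching of that component. An even cycle and an odd path (which has an even number of vertices) each admit a perfect matching, so $c=0$ for them. An even path has an odd number of vertices and its maximum matching is near-perfect, leaving exactly one vertex unmatched, so $c=1$. Thus $c$ equals $1$ precisely on even-path components and $0$ otherwise, meaning $\sum_{\text{components}} c = e(G')$ by definition of $e(G')$.

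Summing over all components then gives
\[
\nu(G') = \sum_{\text{components } C} \nu(C) = \sum_C \frac{|V(C)| - c_C}{2} = \frac{n - e(G')}{2},
\]
using that the vertex counts sum to $n$ (the decomposition is spanning) and the $c_C$ sum to $e(G')$. The only subtlety I want to handle carefully is that isolated vertices are declared to be even paths in the definition: an isolated vertex has one vertex, no edges, matching number $0$, and leaves its single vertex unmatched, so it correctly contributes $c=1$ and is counted in $e(G')$, consistent with the formula.

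I do not expect a genuine obstacle here; the statement is essentially a packaging of elementary facts about matchings in paths and cycles. The one point demanding care is the parity conventions — keeping straight that an \emph{odd} path has an \emph{even} number of vertices (hence a perfect matching and $c=0$) while an \emph{even} path has an \emph{odd} number of vertices (hence $c=1$) — together with confirming that $\nu$ is additive over connected components, which holds because a matching of a disjoint union is just an independent choice of matchings in each component.
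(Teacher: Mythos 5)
Your proof is correct and follows essentially the same approach as the paper: both arguments reduce to the observation that a maximum matching is perfect on odd paths and even cycles but leaves exactly one vertex unmatched on each even path (including isolated vertices), giving $2\nu(G') = n - e(G')$. The only cosmetic difference is that you phrase it via additivity of $\nu$ over components and explicit per-component matching numbers, while the paper argues directly about a single maximum matching of $G'$; the content is identical.
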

\begin{proof}
	Let $M$ be a maximum matching for $G'$. It is clear that $M$ is a perfect matching on each odd path and even cycle of $G'$ and a near perfect matching for each even path. Thus, $M$ misses exactly one vertex from each even path and no other vertex. Hence, $\ep(G') + 2|M| = n$.
\end{proof}
\begin{prop}\label{nu_via_e}
	$\nu(G) = \frac{n - \ep(G)}{2}$. In fact: 
	\begin{enumerate}
		\item For any maximum matching $M$, $G_M$ is a pec decomposition with $\ep(G_M) = \ep(G)$.
		
		\item Conversely, for any pec decomposition $G'$ with $\ep(G') = \ep(G)$, $\nu(G') = \nu(G)$.
	\end{enumerate}
\end{prop}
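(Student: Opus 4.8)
The plan is to prove the central equation $\nu(G) = \frac{n - e(G)}{2}$ together with part (a) in a single stroke, via a minimax-style comparison, and then to read off part (b) directly from \cref{max_in_pec}. Two facts drive everything. First, any matching in a spanning subgraph of $G$ is also a matching in $G$, so $\nu(G') \le \nu(G)$ for every pec decomposition $G'$. Second, a maximum matching $M$ of $G$, viewed as the spanning subgraph $G_M := (V, M)$, is itself a pec decomposition: its components are the single edges of $M$ (odd paths of length $1$) and the vertices left uncovered by $M$ (even paths of length $0$, per the stated convention).

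First I would fix an arbitrary pec decomposition $G'$ and apply \cref{max_in_pec} to get $e(G') = n - 2\nu(G')$. Combining this with the inequality $\nu(G') \le \nu(G)$ yields the lower bound $e(G') \ge n - 2\nu(G)$, valid for \emph{every} pec decomposition; that is, no pec decomposition can push $e$ below $n - 2\nu(G)$.

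Next I would verify that $G_M$ attains this bound. Since the only edges of $G_M$ are the pairwise disjoint edges of $M$, we have $\nu(G_M) = |M| = \nu(G)$, so \cref{max_in_pec} gives $e(G_M) = n - 2\nu(G)$. Together with the previous step, this identifies $G_M$ as a minimizer of $e$ over all pec decompositions, so $e(G) = e(G_M) = n - 2\nu(G)$. Rearranging gives the main equation $\nu(G) = \frac{n - e(G)}{2}$ and simultaneously proves part (a).

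Finally, part (b) is immediate: for any pec decomposition $G'$ with $e(G') = e(G)$, \cref{max_in_pec} and the main equation give $\nu(G') = \frac{n - e(G')}{2} = \frac{n - e(G)}{2} = \nu(G)$. The only genuine content here is the inequality $\nu(G') \le \nu(G)$ paired with the observation that a maximum matching doubles as a pec decomposition realizing equality; the remainder is bookkeeping with \cref{max_in_pec}. I do not expect a real obstacle, but the one point to state with care is that $G_M = (V, M)$ honestly qualifies as a pec decomposition, with the uncovered vertices counted as even paths.
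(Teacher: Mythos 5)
Your proposal is correct and takes essentially the same route as the paper's own proof: both rest on \cref{max_in_pec}, the monotonicity $\nu(G') \le \nu(G)$ for spanning subgraphs, and the observation that $G_M = (V,M)$ is itself a pec decomposition whose maximum matching is $M$. The only difference is presentational---you package the two bounds as showing $G_M$ minimizes $e$ over all pec decompositions, whereas the paper proves the inequalities $\nu(G) \le \frac{n-e(G)}{2}$ and $\nu(G) \ge \frac{n-e(G)}{2}$ separately and then notes both are tight.
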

\begin{proof}
	For $\nu(G) \le \frac{n - \ep(G)}{2}$, let $M$ be a maximum matching. Then $G_M$ is a pec decomposition and $M$ is the unique maximum matching in $G_M$, so by \cref{max_in_pec}, $|M| = \frac{n - \ep(G_M)}{2} \le \frac{n - \ep(G)}{2}$.
	
	For $\nu(G) \ge \frac{n - \ep(G)}{2}$, let $G'$ be a pec decomposition with $\ep(G') = \ep(G)$, so by \cref{max_in_pec}, $\nu(G) \ge \nu(G') = \frac{n - \ep(G')}{2} = \frac{n - \ep(G)}{2}$.
	
	Thus, we have $\nu(G) = \frac{n - \ep(G)}{2}$, so looking back, we see that the inequalities in the first two paragraphs are equalities, and hence $\ep(G_M) = \ep(G)$ and $\nu(G') = \nu(G)$.
\end{proof}
We can find a similar correspondence between $\lambda(G)$ and $\p(G)$ if we consider the pairs of disjoint matchings in $G$ instead of the maximum matchings.
\begin{lemma}\label{total_in_pec}
	For any pec decomposition $G' := (V, E')$ of $G$, $\lambda(G') = |E'| = n - \p(G')$.
\end{lemma}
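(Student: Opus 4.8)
The statement packages two separate equalities, and the plan is to establish them independently. For the right-hand equality $|E'| = n - p(G')$, I would argue by a direct edge count that is local to each component. Every component of $G'$ is either a path or an even cycle; a path on $v$ vertices contributes $v-1$ edges while a cycle on $v$ vertices contributes $v$ edges. Summing the vertex counts over all components recovers $n$, and the only deficit from $n$ arises from the path components, each of which loses exactly one edge relative to its vertex count. Hence $|E'| = n - (\text{number of path components}) = n - p(G')$. Here isolated vertices are correctly handled: a one-vertex path has $v - 1 = 0$ edges and counts once toward $p(G')$, consistent with the convention that isolated vertices are treated as even paths.

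For the left-hand equality $\lambda(G') = |E'|$, the upper bound $\lambda(G') \le |E'|$ is immediate, since for any disjoint pair $(H, H')$ of matchings in $G'$ we have $H, H' \subseteq E'$ and $H \cap H' = \emptyset$, so $|H| + |H'| \le |E'|$. The content is the reverse inequality, which amounts to partitioning all of $E'$ into two matchings, i.e., exhibiting a proper $2$-edge-coloring of $G'$. I would verify this component by component. On a path, coloring the edges alternately with two colors is proper; on an even cycle the same alternating coloring closes up consistently precisely because the length is even. Taking the union of these colorings across all components yields disjoint matchings $H, H'$ with $H \cup H' = E'$, so $\lambda(G') \ge |H| + |H'| = |E'|$, and the two bounds together give equality.

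The crux --- and the single place where the hypothesis that cycles are \emph{even} is used --- is the cycle case of the $2$-edge-coloring: an odd cycle admits no proper $2$-edge-coloring, so some edge would be forced to repeat a color and the partition into two matchings would fail. Everything else is elementary counting, so I do not anticipate a genuine obstacle; the lemma is essentially the verification that the defining components of a pec decomposition are exactly the connected graphs of maximum degree at most $2$ whose edge sets split into two matchings, with the even-cycle condition being precisely what rules out the lone obstruction (odd cycles).
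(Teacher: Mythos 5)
Your proof is correct and follows essentially the same route as the paper: the paper likewise obtains $\lambda(G') = |E'|$ by observing that each component (path or even cycle) can be $2$-edge-colored into two disjoint matchings, and then counts edges per component to get $|E'| = n - p(G')$. Your version is just slightly more explicit about separating the trivial upper bound from the coloring argument and about where evenness of cycles is needed.
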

\begin{proof}
	Let $G' := (V,E')$ be a pec decomposition and let $(H,H')$ be a pair of disjoint matchings of $G'$ with $|H| + |H'| = \lambda(G')$. It is clear that $H \cup H' = E'$ because each component of $G'$ can be taken as an $(H,H')$-alternating path/cycle i.e. a path/cycle whose edges alternate between being in $H$ and $H'$.
	
	It remains to show that $|E'| = n - \p(G')$. Since $G'$ is pec, we have $|E'| = e_p + e_c$, where $e_p$ is the number of edges in paths of $G'$ and $e_c$ is the number of edges in even cycles of $G'$. Since paths have one more vertex than they have edges and cycles have as many vertices as edges, $e_p = n_p - \p(G')$ and $e_c = n_c$, where $n_p$ (resp., $n_c$) is the number of vertices on paths (resp., cycle) of $G'$. Thus, $|E'| = e_p + e_c = n_p - p(G') + n_c = n-\p(G')$.
\end{proof}

\begin{prop}\label{lambda_via_p}
	$\lambda(G) = n - \p(G)$. In fact: 
	\begin{enumerate}
		\item For any $(H,H') \in \Lambda(G)$, $G_{H \cup H'}$ is a pec decomposition with $\p(G_{H \cup H'}) = \p(G)$.
		
		\item Conversely, for any pec decomposition $G'$ with $\p(G') = \p(G)$, $\Lambda(G') \subseteq \Lambda(G)$.
	\end{enumerate}
\end{prop}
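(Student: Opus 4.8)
The plan is to mirror the proof of \cref{nu_via_e} almost verbatim, trading \cref{max_in_pec} for \cref{total_in_pec} and the parameter $e$ for $p$ throughout. I would first establish the equality $\lambda(G) = n - p(G)$ via a two-sided inequality, and then extract parts (a) and (b) by observing that both inequalities are forced to be tight once the equality is known.

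For $\lambda(G) \le n - p(G)$, I would fix any $(H,H') \in \Lambda(G)$ and pass to the spanning subgraph $G_{H \cup H'}$. The one preliminary point requiring genuine checking is that $G_{H \cup H'}$ is a pec decomposition: since $H$ and $H'$ are each matchings, every vertex meets at most one edge of $H$ and at most one of $H'$, so $G_{H \cup H'}$ has maximum degree $2$ and its components are paths and cycles; moreover, consecutive edges at a vertex cannot both lie in $H$ (nor both in $H'$), so every cycle alternates between $H$- and $H'$-edges and is therefore even. Now \cref{total_in_pec} gives $\lambda(G_{H \cup H'}) = n - p(G_{H \cup H'})$, while the edge set of $G_{H \cup H'}$ is exactly $H \cup H'$, so $\lambda(G_{H \cup H'}) = |H| + |H'| = \lambda(G)$. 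Combining these with the minimality $p(G) \le p(G_{H \cup H'})$ yields $\lambda(G) = n - p(G_{H \cup H'}) \le n - p(G)$.

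For the reverse inequality $\lambda(G) \ge n - p(G)$, I would choose a pec decomposition $G'$ realizing $p(G') = p(G)$. By \cref{total_in_pec}, $\lambda(G') = n - p(G') = n - p(G)$, and since $G'$ is a subgraph of $G$, any disjoint pair of matchings witnessing $\lambda(G')$ also lives in $G$; hence $\lambda(G) \ge \lambda(G') = n - p(G)$. Together with the previous paragraph this gives $\lambda(G) = n - p(G)$.

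Finally, for the refinements: reading the first paragraph back with the equality $\lambda(G) = n - p(G)$ in hand forces $p(G_{H \cup H'}) = p(G)$, which is (a). For (b), given a pec decomposition $G'$ with $p(G') = p(G)$ and any $(H,H') \in \Lambda(G')$, we have $|H| + |H'| = \lambda(G') = n - p(G') = n - p(G) = \lambda(G)$, and since these matchings sit inside $G$ as well, the pair lies in $\Lambda(G)$. I do not expect a serious obstacle, as the whole argument is parallel to \cref{nu_via_e}; the only step deserving care is verifying that $G_{H \cup H'}$ contains no odd cycle, so that the pec hypothesis of \cref{total_in_pec} genuinely applies. This even-cycle condition was automatic for $G_M$, whose components are single edges and isolated vertices, but here it must be read off from the matching structure of $H$ and $H'$.
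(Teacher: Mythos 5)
Your proposal is correct and follows essentially the same route as the paper: both directions of the inequality via \cref{total_in_pec}, with the observation that disjointness of $H$ and $H'$ rules out odd cycles in $G_{H \cup H'}$, and then tightness of the inequalities yielding parts (a) and (b). Your treatment is, if anything, slightly more explicit than the paper's (the alternation argument for even cycles, and the verification that pairs in $\Lambda(G')$ actually land in $\Lambda(G)$), but the substance is identical.
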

\begin{proof}
	For $\lambda(G) \le n - \p(G)$, let $(H,H') \in \Lambda(G)$. Then every vertex in the spanning subgraph $G_{H \cup H'}$ has degree at most 2 in $G_{H\cup H'}$. Additionally, there can be no odd cycles in $G_{H \cup H'}$ because $H$ and $H'$ are disjoint. $G_{H \cup H'} $ is thus a pec decomposition, so by \cref{total_in_pec}, $\lambda(G) = |H| + |H'| = \lambda(G_{H \cup H'}) = n - \p(G_{H \cup H'}) \le n - \p(G)$.
	
	For $\lambda(G) \ge n - \p(G)$, let $G'$ be a pec decomposition with $\p(G') = \p(G)$. Then by \cref{total_in_pec}, $\lambda(G) \ge \lambda(G') = n - \p(G') = n - \p(G)$.
	
	Thus, we have $\lambda(G) = n - \p(G)$, so looking back, we see that the inequalities in the first two paragraphs are equalities, and hence $\p(G_{H \cup H'}) = \p(G)$ and $\lambda(G') = \lambda(G)$.
\end{proof}
 Now, define 
\[
\boldsymbol{\epp(G)} := \min\set{\ep(G') : \text{$G'$ is a pec decomposition of } G \text{ with } \p(G') = \p(G)},
\]
i.e. $\epp(G)$ is the minimum number of even paths in a pec decomposition that minimizes the number of paths. If we now take $(H,H')\in$ $\Lambda_\mu$ instead of $\Lambda$, we obtain a result for $\mu(G)$ and $\epp(G)$.

\begin{lemma}\label{mu_via_ep_p}
	$\mu(G) = \frac{n - \epp(G)}{2}$. In fact: 
	\begin{enumerate}
		\item For any $(H,H') \in \Lambda_\mu(G)$, $G_{H \cup H'}$ is a pec decomposition with $\p(G_{H \cup H'}) = \p(G)$ and $\ep(G_{H \cup H'}) = \epp(G)$.
		
		\item Conversely, for any pec decomposition $G'$ with $\ep(G') = \ep(G)$, $\Lambda_\mu(G') \subseteq \Lambda_\mu(G)$.
	\end{enumerate}
\end{lemma}
\begin{proof}
	For $\mu(G) \le \frac{n - \epp(G)}{2}$, let $(H,H') \in \Lambda_\mu(G)$. Then $G_{H \cup H'}$ is a pec decomposition. By \cref{total_in_pec,lambda_via_p}, $\lambda(G) = |H| + |H'| = n - \p(G_{H \cup H'})$, so $\p(G_{H \cup H'}) = \p(G)$. Furthermore, $H$ is a maximum matching in $G_{H \cup H'}$, so by \cref{max_in_pec}, $|H| = \frac{n - \ep(G_{H \cup H'})}{2} \le \frac{n - \epp(G)}{2}$.
	
	For $\mu(G) \ge \frac{n - \epp(G)}{2}$, let $G' := (V, E')$ be a pec decomposition with $\p(G') = \p(G)$ and $\ep(G') = \epp(G)$. Let $(H,H') \in \Lambda_\mu(G')$. By \cref{lambda_via_p}, $|H| + |H'| = |E'| = n - \p(G') = n - \p(G) = \lambda(G)$, so $(H,H') \in \Lambda(G)$. Moreover, $H \cup H' = E'$ and $H$ is a maximum matching for $G'$, so by \cref{max_in_pec}, $\mu(G) \ge |H| = \frac{n - \ep(G')}{2} = \frac{n - \epp(G)}{2}$.
	
	Thus, we have $\mu(G) = \frac{n - \epp(G)}{2}$, so looking back, we see that the inequalities in the first two paragraphs are equalities, and hence $G_{H \cup H'}$ achieves $\p(G)$ and $\epp(G)$, and $\Lambda_\mu(G') \subseteq \Lambda_\mu(G)$.
\end{proof}
These new $e, \epp,$ and $p$ parameters provide as much information as our original $\nu, \mu$, and $\lambda$. In fact, we can now restate theorems concerning the ratio $\frac{\mu(G)}{\nu(G)}$ in terms of our new pec parameters.
\begin{cor}
	$\mu(G) = \nu(G)$ if and only if $\ep(G) = \epp(G)$.
\end{cor}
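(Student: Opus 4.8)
The plan is to reduce the statement to pure algebra using the two formulas already established in this section. By \cref{nu_via_e}, we have $\nu(G) = \frac{n - e(G)}{2}$, and by \cref{mu_via_e_p}, we have $\mu(G) = \frac{n - e_p(G)}{2}$. Since both parameters are expressed as $\frac{n - (\,\cdot\,)}{2}$ with the same ambient vertex count $n$, the equality $\mu(G) = \nu(G)$ is equivalent to the numerators matching, i.e.\ to $n - e_p(G) = n - e(G)$, which cancels to $e_p(G) = e(G)$. Thus I would simply chain these equivalences:
\[
\mu(G) = \nu(G) \iff \frac{n - e_p(G)}{2} = \frac{n - e(G)}{2} \iff e(G) = e_p(G).
\]

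The key step is therefore just the invocation of the two preceding results; no genuine obstacle remains, since all the substantive work has already been done in proving \cref{nu_via_e,mu_via_e_p}. If desired, I would remark for the reader's intuition that one always has $e_p(G) \ge e(G)$, because $e_p(G)$ minimizes the number of even paths over the \emph{restricted} family of pec decompositions that also minimize the number of paths, whereas $e(G)$ minimizes over \emph{all} pec decompositions; this inequality mirrors the always-true inequality $\mu(G) \le \nu(G)$ and confirms the direction of the correspondence, but it is not needed for the biconditional itself.
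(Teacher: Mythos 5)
Your proof is correct and is essentially the paper's own argument: the paper's proof is literally ``Follows from \cref{nu_via_e,mu_via_e_p},'' and you have simply written out the algebra that this citation leaves implicit. Your closing remark that $e_p(G) \ge e(G)$ (mirroring $\mu(G) \le \nu(G)$) is a valid and harmless addition, but not part of the required argument.
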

\begin{proof}
	Follows from \cref{nu_via_e,mu_via_ep_p}.
\end{proof}

\begin{example}
	If $G$ is the spanner, $\ep(G) = 0$ and it is achieved by a pec decomposition $G'$ into a path of length $5$ and two paths of length $1$. Therefore, $\nu(G) = \frac{10 - 0}{2} = 5$. On the other hand, $\p(G) = 2$ and it is achieved by a unique pec decomposition $G''$ into two paths of length $4$. Thus, $\lambda(G) = 10 - 2 = 8$. Because such a $G''$ is unique, it also witnesses $\epp(G) = 2$, so $\mu(G) = \frac{10 - 2}{2} = 4$.
\end{example}
We present one more brief result for pec decompositions that is useful in studying size differences of disjoint matchings.
\begin{cor}
	$\mu(G) - \mu'(G) = \p(G) - \epp(G)$. In particular, $\mu'(G) = \frac{n - 2 \p(G) + \epp(G)}{2}$.
\end{cor}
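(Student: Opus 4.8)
The plan is to prove both parts by direct substitution of the formulas already established in this section, treating the statement as an algebraic consequence of the definition $\mu'(G) := \lambda(G) - \mu(G)$ together with \cref{lambda_via_p,mu_via_e_p}. There is no genuine combinatorial content here: everything follows from putting two known expressions over a common denominator, so the work is purely bookkeeping with the parameters $n$, $p(G)$, $e(G)$, and $e_p(G)$.

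First I would establish the ``in particular'' clause, since it is the more concrete of the two. Starting from $\mu'(G) = \lambda(G) - \mu(G)$, I substitute $\lambda(G) = n - p(G)$ from \cref{lambda_via_p} and $\mu(G) = \frac{n - e_p(G)}{2}$ from \cref{mu_via_e_p}. Writing the first term with denominator $2$ and combining, the two copies of $n$ partially cancel, leaving $\mu'(G) = \frac{2n - 2p(G) - (n - e_p(G))}{2} = \frac{n - 2p(G) + e_p(G)}{2}$, exactly as claimed.

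Next I would obtain the first identity either by a second direct computation or, more economically, by subtracting the two closed forms I now have in hand. Using $\mu(G) = \frac{n - e_p(G)}{2}$ and the expression for $\mu'(G)$ just derived, the difference $\mu(G) - \mu'(G) = \frac{(n - e_p(G)) - (n - 2p(G) + e_p(G))}{2}$ has both $n$ terms cancel, yielding $\frac{2p(G) - 2e_p(G)}{2} = p(G) - e_p(G)$. Since the two clauses are algebraically equivalent given the definition of $\mu'$, one could equally well prove the first identity directly and then rearrange to recover the formula for $\mu'(G)$; I would present whichever ordering reads most cleanly.

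The only point requiring any care—and the closest thing to an obstacle—is simply invoking the correct earlier result for each parameter: that $\lambda(G)$ is controlled by $p(G)$ via \cref{lambda_via_p} and that $\mu(G)$ is controlled by $e_p(G)$ via \cref{mu_via_e_p}, rather than confusing $e_p(G)$ with $e(G)$. Once those substitutions are made correctly, the remaining manipulation is a one-line calculation.
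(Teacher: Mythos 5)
Your proof is correct, but it takes a different (purely algebraic) route from the paper. You treat everything as substitution: from the definition $\mu'(G) = \lambda(G) - \mu(G)$, you plug in $\lambda(G) = n - p(G)$ from \cref{lambda_via_p} and $\mu(G) = \frac{n - e_p(G)}{2}$ from \cref{mu_via_e_p}, obtain the ``in particular'' formula first, and then subtract to get the main identity; the arithmetic checks out. The paper instead proves the first identity combinatorially: for a pec decomposition $G'$ achieving $p(G)$ and $e_p(G)$ and any $(H,H') \in \Lambda_\mu(G')$, the difference $\mu(G) - \mu'(G) = |H| - |H'|$ counts exactly one edge per odd path component of $G'$ (even paths and even cycles contribute zero), which is $p(G') - e(G') = p(G) - e_p(G)$; the formula for $\mu'(G)$ then follows algebraically. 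Your version is more mechanical and leans on \cref{lambda_via_p}, which the paper's proof of the first assertion does not explicitly invoke; the paper's version buys a combinatorial explanation of \emph{why} the identity holds --- the quantity $\mu - \mu'$ literally counts the odd paths in an optimal decomposition --- which is the kind of structural insight the rest of the paper trades on. Both are complete proofs.
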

\begin{proof}
	The first assertion follows from \cref{mu_via_ep_p} because for any pec decomposition $G'$ and any $(H,H') \in \Lambda_\mu(G')$, 	$\mu(G) - \mu'(G) = |H| - |H'|$ is equal to the number of components of $G'$ that are odd paths, namely, $\p(G') - \ep(G')$. The second equality is by \cref{mu_via_ep_p}.
\end{proof}

\section{Basics tools}\label{sec:tools}

\noindent In this section we will state and prove several structural lemmas about alternating chains (i.e., paths or cycles) that will be used to prove our main theorems. Let $G \defeq (V,E)$ be a finite connected graph. Among the maximum matchings and maximum disjoint matchings in $G$, there are some that have maximal intersection, and this condition allows for sharper restrictions on the possible edge configurations. More specifically, we define \emph{maximally intersecting} matchings:
\begin{defn}\label{maximal}
Matchings $M$, $H$, and $H'$ are called \emph{maximally intersecting} if they satisfy the following:
\begin{enumerate}[(i)]
	\item \label{conditions:M_maximum} $M$ is a maximum matching.
	
	\item \label{conditions:HH'_Lambda_mu} $(H,H') \in \Lambda_\mu(G)$ with $|H| = \mu(G)$.
	
	\item \label{conditions:M_intersect_HH'} $|M \cap (H \cup H')|$ is maximum among all $M,H,H'$ satisfying \labelcref{conditions:M_maximum}--\labelcref{conditions:HH'_Lambda_mu}.
	
	\item \label{conditions:M_intersect_H} $|M \cap H|$ is maximum among all $M,H,H'$ satisfying \labelcref{conditions:M_maximum}--\labelcref{conditions:M_intersect_HH'}.
\end{enumerate}
\end{defn}
\medskip

As we will demonstrate in the next section, these matchings capture the most important structural behavior of $\frac{\mu(G)}{\nu(G)}<1$ graphs. Throughout this section, let $M$, $H$, $H'$ be maximally intersecting matchings. Here are a few basic facts about $M$-$H$ alternating chains. Define an \emph{end-edge} of a path to be an edge in the path incident to one of the path's terminal vertices.

\begin{lemma}\label{known_facts}\
	\begin{enumerate}[(\ref*{known_facts}.a)]
		\item \label{MH-chains_odd} The maximal $M$-$H$ alternating chains are odd paths with end-edges in $M$.
						
		\item \label{MH-endedges_H'} The end-edges of $M$-$H$ alternating paths are in $H'$.

		\item \label{MH-vertices_incident_H'} All vertices on maximal $M$-$H$ alternating paths are incident to $H'$.
	\end{enumerate}
\end{lemma}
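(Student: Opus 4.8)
The plan is to view the maximal $M$-$H$ alternating chains as exactly the connected components of the symmetric difference $M \triangle H := (M-H)\cup(H-M)$: in this spanning subgraph every vertex has degree at most two (at most one edge from each matching), so each component is a path or a cycle whose edges alternate between $M-H$ and $H-M$, and every such cycle is even. The basic tool throughout is the \emph{flip} $M \mapsto M \triangle E(K)$ along a chain $K$, which exchanges the $M-H$ edges of $K$ for its $H-M$ edges; I will track how a flip changes the two quantities $|M\cap H|$ and $|M\cap(H\cup H')|$ that are maximized in \labelcref{conditions:M_intersect_HH'} and \labelcref{conditions:M_intersect_H}.

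For \ref{MH-chains_odd}, I would first dispose of the odd paths whose end-edges lie in $H-M$: both endpoints of such a path are $M$-unsaturated, so the path is $M$-augmenting, contradicting that $M$ is maximum. It remains to rule out the even chains (even cycles and even paths), and here the intersection conditions enter. An even chain $K$ has equally many $M-H$ and $H-M$ edges, say $k$ of each, so the flip $M' := M\triangle E(K)$ is again a maximum matching. Since the flip replaces $k$ edges of $M-H$ by $k$ edges of $H-M \subseteq H$, it raises $|M\cap H|$ by exactly $k$, while $|M\cap(H\cup H')|$ changes by $k$ minus the number of $M-H$ edges of $K$ lying in $H'$. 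Maximality of $|M\cap(H\cup H')|$ in \labelcref{conditions:M_intersect_HH'} forces this change to be $\le 0$, so \emph{every} $M-H$ edge of $K$ lies in $H'$ and the change is exactly $0$; thus $M',H,H'$ still satisfy \labelcref{conditions:M_maximum}--\labelcref{conditions:M_intersect_HH'}, yet $|M'\cap H| = |M\cap H| + k > |M\cap H|$ contradicts \labelcref{conditions:M_intersect_H}. Hence no even chain occurs, leaving precisely the odd paths with end-edges in $M$.

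For \ref{MH-endedges_H'} I would argue by contradiction: if an end-edge $e = v_0v_1 \in M-H$ of such a path is not in $H'$, I want to produce from $(H,H')$ a new pair in $\Lambda_\mu(G)$ that puts $e$ into the second matching, thereby strictly increasing $|M\cap(H\cup H')|$ and contradicting \labelcref{conditions:M_intersect_HH'}. The endpoint $v_0$ is $H$-unsaturated, and neither $v_0$ nor $v_1$ carries an $M$-edge other than $e$; so if $v_0$ is $H'$-unsaturated one can simply move $e$ into $H'$ (deleting at most the single $H'$-edge at $v_1$, which is not in $M$ and so costs nothing), gaining $e\in M\cap H'$. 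The main obstacle is the case where both $v_0$ and $v_1$ are already $H'$-saturated by edges different from $e$: a naive swap would delete two $H'$-edges to insert one, violating $|H'| = \mu'(G)$. Resolving it requires following the $H,H'$-alternating path components of the pec decomposition $G_{H\cup H'}$ through $v_0$ and $v_1$ and rerouting $H'$ along them, using \cref{lambda_via_p,mu_via_e_p} to guarantee the reroute preserves $p(G_{H\cup H'}) = p(G)$ and $e(G_{H\cup H'}) = e_p(G)$ and hence keeps the pair in $\Lambda_\mu(G)$; this alternating reroute is the crux.

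Finally, I would deduce \ref{MH-vertices_incident_H'} largely from \ref{MH-endedges_H'}. Each endpoint of a maximal $M$-$H$ path is incident to its end-edge, which lies in $M\cap H'$ by \ref{MH-endedges_H'}, so the endpoints are incident to $H'$. Every interior vertex is incident to exactly one $H$-edge of the path and is therefore $H$-saturated; an $H$-saturated vertex fails to meet $H'$ only if it is an $H$-endpoint of a path component of $G_{H\cup H'}$. I would rule this out by the same move-into-$H'$ strategy applied to the path's $M-H$ edge at that vertex, the genuinely difficult subcase again being that both ends of that edge are $H'$-saturated, which is handled by the alternating reroute described above.
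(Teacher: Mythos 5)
Your part \labelcref{MH-chains_odd} is correct and is essentially the paper's own flip argument; in fact you are more careful than the paper, since you verify that flipping $M$ along an even chain preserves condition \labelcref{conditions:M_intersect_HH'} before invoking \labelcref{conditions:M_intersect_H}. The genuine gap is in part \labelcref{MH-endedges_H'}. Your strategy is to insert the end-edge $e_0 = v_0v_1 \in M\setminus H$ into $H'$, and you correctly identify the obstruction: when $v_0$ and $v_1$ both already carry $H'$-edges, the naive swap deletes two $H'$-edges to add one, dropping $|H|+|H'|$ below $\lambda(G)$. At exactly this point you write that the resolution requires ``rerouting $H'$'' along the $H$-$H'$ alternating paths and call it ``the crux''---but you never carry the reroute out, so \labelcref{MH-endedges_H'} is not proved. (The reroute can be made to work: the maximal $H$-$H'$ path leaving $v_0$ through its $H'$-edge must be even, so swapping $H$ and $H'$ along it stays in $\Lambda_\mu(G)$ and frees $v_0$; but one must then split into cases according to whether that path passes through $v_1$, and none of this is in your text.) The paper sidesteps the obstacle entirely by modifying $H$ rather than $H'$: since the $M$-$H$ path is maximal, its terminal vertex $v_0$ is incident to no edge of $H$ at all, so one may delete the adjacent path-edge $e_1 \in H\setminus M$ from $H$ and insert $e_0$ into $H$. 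The result is still a matching disjoint from $H'$ (here the hypothesis $e_0 \notin H'$ is what gives disjointness), sizes are unchanged, and the union $H\cup H'$ trades $e_1 \notin M$ for $e_0 \in M$, contradicting \labelcref{conditions:M_intersect_HH'} with no case analysis whatsoever.

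Your part \labelcref{MH-vertices_incident_H'} is then weakened by leaning on the same unproven reroute, but there the difficulty you fear cannot actually occur: the interior vertex $v$ is \emph{assumed} not incident to $H'$, so when you move its $M\setminus H$ edge $e_M = vw$ into $H'$, only the far endpoint $w$ can carry an $H'$-edge; deleting that single edge (which is not in $M$, since $w$'s $M$-edge is $e_M$) and adding $e_M$ keeps the pair in $\Lambda_\mu(G)$ and increases $|M\cap(H\cup H')|$ directly. This simple local swap is exactly the paper's proof of \labelcref{MH-vertices_incident_H'}; no rerouting is needed once you notice that one endpoint is automatically $H'$-free.
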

\begin{proof}
    
    \noindent\labelcref{MH-chains_odd}  Let $P$ be such a chain. Since these chains are alternating, $P$ cannot be an odd cycle. If $P$ is an even path or cycle, define $M'\defeq(M\setminus P)\cup(P\setminus M)$. $M'$ is a matching since $M$ is a matching and since $P$ is alternating. Furthermore, $|M'|=|M|$, and so $M'$ is a maximum matching. However, $|M'\cap (H\cup H')|\geq|M\cap (H\cup H')|$ since not all $M\cup P$ edges may have been in $H'$, and since $|M'\cap H|>|M\cap H|$, either of $\labelcref{conditions:M_intersect_HH'}$ or $\labelcref{conditions:M_intersect_H}$ must be violated, a contradiction. Now suppose that some end-edge of $P$ is not in $M$. This edge, $e_0$, is in $H\setminus M$, and is adjacent to exactly one $M$ edge $e_1$ in $P$. But then if $e_1$ is removed from $M$ and $e_0$ is added to $M$, $|M\cap H|$ is again increased, a contradiction.
    
    \medskip
    
    \noindent\labelcref{MH-endedges_H'}
    Consider an end-edge $e_0$ in $M\setminus H$. Again note that $e_0$ can be incident to $H$ on only one side. If $e_0$ is not in $H'$, then we can remove $e_1$, the preceding edge, from $H$, add $e_0$ to $H$, and again increase $|M\cap(H\cup H')|$.
    
    \medskip
    
    \noindent\labelcref{MH-vertices_incident_H'}
    Suppose instead some interior vertex is not incident to $H'$, and note that this vertex is incident to no end-edges. This vertex is incident to exactly one edge in $M$, which we call $e_M$. $e_M$ is incident to $H'$ at most once. Via a similar argument to above, if we take $e_M$ to be in $H'$ instead of this adjacent edge, we increase $|M\cap(H\cup H')|$.
\end{proof}
We now present similar structural lemmas about maximal $H$-$H'$ alternating paths.
		
\begin{lemma}\label{new_facts}\
	\begin{enumerate}[(\ref*{new_facts}.a)]
		\item \label{HH'-odd-paths_begin_H} If a maximal $H$-$H'$ alternating path is odd, then its end-edges are in $H$.
		
		\item \label{MH-endedges_HH'-even} The end-edges of maximal $M$-$H$ alternating paths (which we know are in $H'$) are end-edges of even $H$-$H'$ alternating paths.
		
		\item \label{HH'-end-vertices} All inner (not end) vertices of maximal $M$-$H$ alternating paths are inner vertices of $H$-$H'$ alternating paths. In other words, each end-vertex of an $H$-$H'$ alternating path is either an end-vertex of a maximal $M$-$H$ alternating path or is not on any $M$-$H$ alternating path.
		
		\item \label{HH'_even_MH>MH'} On every even maximal $H$-$H'$ alternating chain, the edges in $H \cap M$ are at least as many as those in $H' \cap M$.
	
		\item \label{HH'-end-H-edges_M} Each end-edge of a maximal $H$-$H'$ alternating path that is in $H$ is also in $M$.
	\end{enumerate}
\end{lemma}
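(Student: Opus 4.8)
The plan is to treat the five parts in the stated order, relying on two exchange arguments (for \labelcref{HH'-odd-paths_begin_H} and \labelcref{HH'_even_MH>MH'}) and three structural deductions (for \labelcref{MH-endedges_HH'-even}, \labelcref{HH'-end-vertices}, and \labelcref{HH'-end-H-edges_M}) that combine the $M$-$H$ facts of \cref{known_facts} with the degree-and-parity bookkeeping forced by the $H$-$H'$ alternating structure. Throughout I use that each vertex lies on a unique maximal $H$-$H'$ chain, since $H \cup H'$ gives every vertex degree at most two.

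For \labelcref{HH'-odd-paths_begin_H} I would argue by contradiction. Both end-edges of an odd path have the same type, so if they lay in $H'$ the path would carry one more $H'$-edge than $H$-edge. Swapping the roles of $H$ and $H'$ along this maximal path yields a pair $(H_1,H_1')$ with $|H_1| = |H|+1$ but $|H_1|+|H_1'| = \lambda(G)$ unchanged, contradicting $|H| = \mu(G)$ from \labelcref{conditions:HH'_Lambda_mu}. The point to verify is that $H_1,H_1'$ are genuine matchings, and this is exactly where maximality of the path enters: a terminal vertex whose end-edge is in $H'$ has no other incident $H$-edge, so promoting that end-edge to $H_1$ creates no adjacency.

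The three structural parts are then quick. For \labelcref{MH-endedges_HH'-even}, an end-edge $e$ of a maximal $M$-$H$ path lies in $M \cap H'$ by \labelcref{MH-chains_odd} and \labelcref{MH-endedges_H'}; its terminal vertex meets no $H$-edge (otherwise the $M$-$H$ path would extend), so it is an endpoint of the $H$-$H'$ path through $e$, and that path cannot be odd by \labelcref{HH'-odd-paths_begin_H} since the end-edge $e$ is in $H'$. For \labelcref{HH'-end-vertices}, an inner vertex of an $M$-$H$ path meets an $H$-edge from the path and, by \labelcref{MH-vertices_incident_H'}, an $H'$-edge, hence has $H$-$H'$ degree two and is an inner vertex of its $H$-$H'$ chain; the contrapositive gives the restated form. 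Part \labelcref{HH'-end-H-edges_M} then follows from \labelcref{HH'-end-vertices}: were an end-edge $e \in H$ of a maximal $H$-$H'$ path not in $M$, it would sit in $H \setminus M$ and hence be an interior edge of some maximal $M$-$H$ path (interior since the end-edges of such paths are in $M \setminus H$ by \labelcref{MH-chains_odd}), forcing its terminal vertex to be an inner vertex of an $M$-$H$ path and, by \labelcref{HH'-end-vertices}, an inner vertex of its $H$-$H'$ chain, contradicting that it is an endpoint.

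I expect \labelcref{HH'_even_MH>MH'} to be the main obstacle, as it is the only part invoking \labelcref{conditions:M_intersect_H}. Here I would swap $H$ and $H'$ along a single even maximal $H$-$H'$ chain $C$ on which, for contradiction, $|C \cap H' \cap M| > |C \cap H \cap M|$. Two observations make this work: evenness means the swap preserves $|H|$ and $|H'|$, keeping the pair in $\Lambda_\mu(G)$; and $H_1 \cup H_1' = H \cup H'$ as edge sets, so $|M \cap (H_1 \cup H_1')| = |M \cap (H \cup H')|$ and condition \labelcref{conditions:M_intersect_HH'} stays saturated. The swap turns the $M$-edges of $C$ that were in $H'$ into $M$-edges in $H_1$, giving $|M \cap H_1| > |M \cap H|$ and contradicting the maximality in \labelcref{conditions:M_intersect_H}. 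The delicate point, as in \labelcref{HH'-odd-paths_begin_H}, is checking that swapping along a maximal even path leaves valid matchings at both terminal vertices, using that an even path has end-edges of opposite type and that maximality rules out conflicting edges outside $C$.
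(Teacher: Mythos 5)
Your proposal is correct and takes essentially the same approach as the paper: exchange (swap) arguments along a single maximal chain for \labelcref{HH'-odd-paths_begin_H} and \labelcref{HH'_even_MH>MH'}, contradicting the maximality conditions in \cref{maximal}, together with structural deductions from \cref{known_facts} for \labelcref{MH-endedges_HH'-even}, \labelcref{HH'-end-vertices}, and \labelcref{HH'-end-H-edges_M}. The only difference is one of detail: you explicitly verify that the swaps yield valid matchings at the terminal vertices of the maximal chains, a check the paper's proof leaves implicit.
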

\begin{proof}
	\labelcref{HH'-odd-paths_begin_H}: Otherwise, we would switch $H$ and $H'$ on that path, keeping $|H \cup H'|$ the same, but increasing $H$. 
	
	\medskip
	
	\noindent \labelcref{MH-endedges_HH'-even}: Each such edge is adjacent to $H$ exactly on one side, so it is an end-edge of a maximal $H$-$H'$ alternating path, which hence must be even by \labelcref{HH'-odd-paths_begin_H}.
	
	\medskip
	
	\noindent \labelcref{HH'-end-vertices} This is just because each inner vertex of a maximal $M$-$H$ alternating path is incident to $H$ by definition and is also incident to $H'$ by \labelcref{MH-vertices_incident_H'}.
	
	\medskip
	
	\noindent \labelcref{HH'_even_MH>MH'} Otherwise, swapping $H$ and $H'$ contradicts condition \labelcref{conditions:M_intersect_H}.
	
	\medskip
	
	\noindent \labelcref{HH'-end-H-edges_M} Every edge in $H \setminus M$ is an inner edge on a maximal $M$-$H$ alternating path, so it cannot be an end-edge of a maximal $H$-$H'$ alternating path by \labelcref{HH'-end-vertices}.
\end{proof}


\section{Subclass of graphs with $\frac{\mu(G)}{\nu(G)}<1$}\label{sec:ratio<1}

We are now prepared to state and prove our main theorems. For $d \in \N$, we call a vertex in a graph a \emph{$d$-vertex} if its degree is $d$, and we say that a graph has degree $d$ if the degree of each vertex is at most $d$. An edge incident to a leaf (a $1$-vertex) is called a \emph{leaf-edge}.

A path in a graph whose end-vertices are leaves is called \emph{leaf-to-leaf}. Call an edge $e$ on an odd path $p$ \emph{odd} (resp. \emph{even}) 
 if it is in an odd (resp. \emph{even}) position in both obvious orderings of the $p$ edges, i.e. the end edges of $p$ are both odd. Similarly, call a vertex $v$ on an even path $p$ \emph{odd} (resp. \emph{even}) if its edge-distance from either end-edge of $p$ is odd (resp. even).

Our main theorems, \cref{theorem:skelehask} and \cref{theorem:khasskele}, establish that the core structural properties of graphs with $\frac{\mu(G)}{\nu(G)}<1$ are due to a graph's $\textit{skeleton}$ structure.


\begin{defn}\label{defn:skeleton}
	A connected nonempty graph $G$ is called a \emph{\skeleton} if it is of degree $3$ and admits a (not necessarily spanning) subgraph $G' \defeq (V',E')$ satisfying the following conditions:
	\begin{enumerate}[(\ref*{defn:skeleton}.i)]
		\item \label{defn:anush-didnt-label-this-lol} $G'$ consists of vertex-disjoint odd leaf-to-leaf paths of length at least $5$.
		
		\item \label{defn:skeleton:3-vertices} For each odd edge on a maximal path in $G'$, either both of its incident vertices have $G$-degree $3$ (in which case, we call this odd edge \emph{rich}) or neither of them do.
		
		\item \label{defn:so-sam-labels-it-instead} Each maximal path in $G'$ contains a $3$-vertex.
		
		\item \label{defn:skeleton:G-V'_degree_2} The vertices in $V \setminus V'$ have $G$-degree at most $2$.
		
		\item \label{defn:scream-loudly-if-you-find-this} The graph $G \setminus V'$ obtained from $G$ by removing $V'$ consists of vertex-disjoint odd paths.
		
		\item \label{defn:skeleton:end-vertices-G'} Let $G''$ the subgraph of $G$ obtained by removing the rich edges. The $G''$-connected component of each end-vertex of a maximal path in $G'$ is an even path and these are the only connected components that are even paths.
		
		\item \label{defn:skeleton:G''_bipartite} $G''$ is bipartite.
		
		\item\label{defn:nobadcycles} Let $M$ be the matching containing each odd edge of $G'$ and each even edge of $G\backslash V'$. Then $G$ contains no $M - M^c$ alternating cycles.
	\end{enumerate}
	\begin{figure}
    \centering
    \includegraphics[width=0.75\textwidth]{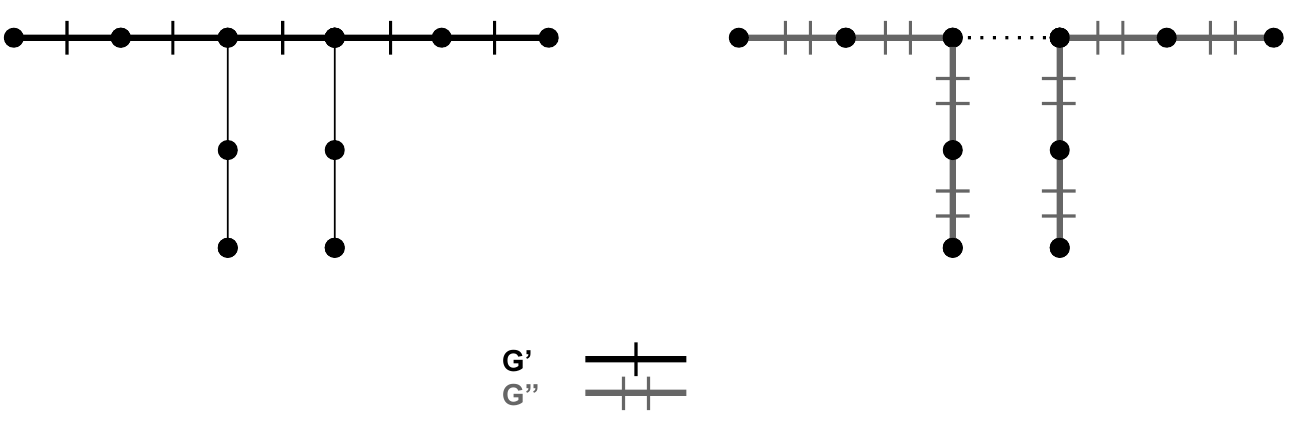}
     \caption{The spanner is a $1$-skeleton}
    \label{fig:1skeleton}
\end{figure}
	Take note in particular of the final condition involving $M - M^c$ alternating cycles. Some of the following proofs are argued by contradiction to construct such a cycle. Call a skeleton a \emph{$k$-skeleton} if $G'$ has $k$-many connected components (odd paths). The spanner is an example of a $1$-skeleton.
\end{defn}

\begin{remark}
    It follows from \labelcref{defn:skeleton:3-vertices,defn:skeleton:G-V'_degree_2} that removing the rich edges from $G$ results in a graph $G''$ of degree $2$, so each connected component of $G''$ is a path or a cycle. Condition \labelcref{defn:skeleton:end-vertices-G'} demands that the connected components of the end-vertices of the maximal paths in $G'$ are exactly those that are even paths (necessarily of length at least $4$). Condition \labelcref{defn:skeleton:G''_bipartite} demands that there are no odd cycles in $G''$.
\end{remark}
The main theorems of this paper, \cref{theorem:skelehask} and \cref{theorem:khasskele}, make the following characterization. A connected graph $G=(V,E)$ is a $k$-skeleton if and only iff $\nu(G)-\mu(G)=k$ and any $M,H,H'$ satisfying conditions \labelcref{conditions:M_maximum}--\labelcref{conditions:M_intersect_H} cover all of $E$, i.e., $E = M \cup H \cup H'$. This condition for an edge covering of $G$ is not arbitrary; the key structural properties in a graph that cause $\frac{\mu(G)}{\nu(G)}<1$ are included in the definition of skeleton, and there are very few ways that edges not in this choice of $(M,H,H')$ can be added to a skeleton $G$ without forcing $\frac{\mu(G)}{\nu(G)}=1$. A brief discussion for these remaning unmatched edges appears in the last section.

We prove a proposition before proving our main theorems.
\begin{prop}\label{prop:uniquem}
	Every skeleton admits a unique perfect matching.
\end{prop}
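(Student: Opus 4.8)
The plan is to exhibit a \emph{canonical} perfect matching on any \skeleton $G$ and then argue it is the only one, using the structural conditions \labelcref{defn:anush-didnt-label-this-lol}--\labelcref{defn:nobadcycles} to pin it down. The natural candidate is the matching $M$ defined in condition \labelcref{defn:nobadcycles}: take every odd edge of each maximal path in $G'$, together with every even edge of each odd path in $G - V'$. First I would verify that $M$ is indeed a perfect matching. On each maximal leaf-to-leaf path $p$ in $G'$, the odd edges form a near-perfect matching saturating all vertices of $p$ (since $p$ is odd and leaf-to-leaf, the end-edges are odd and every vertex of $p$ is covered); by \labelcref{defn:anush-didnt-label-this-lol} these paths are vertex-disjoint, so the odd edges saturate exactly $V'$. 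By \labelcref{defn:scream-loudly-if-you-find-this} the graph $G - V'$ is a disjoint union of odd paths, and on each odd path the even edges (those at odd edge-distance from the ends) form a perfect matching of that path. Hence $M$ saturates $V \setminus V'$ as well, so $M$ is a perfect matching provided these two edge sets are genuinely disjoint and non-adjacent --- which holds because one set lives on $V'$ and the other on $V \setminus V'$, and \labelcref{defn:skeleton:G-V'_degree_2} controls the interaction. This establishes \emph{existence}.

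For \emph{uniqueness}, suppose $N$ is any perfect matching of $G$ and consider the symmetric difference $M \triangle N$. Since both are perfect matchings, every component of $M \triangle N$ is an even $M$-$N$ alternating cycle (isolated vertices aside). The goal is to show $M \triangle N = \emptyset$. I would argue that any such alternating cycle must be an $M, M^c$ alternating cycle, which condition \labelcref{defn:nobadcycles} forbids outright. Concretely, if $C$ is a component of $M \triangle N$, then $C$ alternates between $M$ and $N \setminus M \subseteq M^c$; since $C$ is a cycle all of whose non-$M$ edges lie outside $M$, it is precisely an $M, M^c$ alternating cycle, contradicting \labelcref{defn:nobadcycles}. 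Therefore $M \triangle N$ has no cycles, forcing $N = M$.

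The main obstacle I anticipate is the existence verification being slightly more delicate than the one-line uniqueness argument suggests: I must make sure the two halves of $M$ (the odd edges in $G'$ and the even edges in $G - V'$) fit together into an honest matching rather than merely covering every vertex. The subtlety is that a vertex of $V'$ is saturated by an odd edge of $G'$, but it might also be incident in $G$ to edges of $G - V'$; I need to confirm no vertex is doubly covered. This is exactly where \labelcref{defn:skeleton:3-vertices} and \labelcref{defn:skeleton:G-V'_degree_2} enter: the degree constraints and the rich/non-rich dichotomy guarantee that a vertex saturated inside $G'$ contributes no $M$-edge from $G - V'$, and vice versa. Once disjointness of the two edge sets and the correct vertex-saturation are checked, existence follows and uniqueness is immediate from \labelcref{defn:nobadcycles}, completing the proof.
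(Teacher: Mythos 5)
Your uniqueness idea is the right one and is in fact cleaner than what the paper does, but your existence step contains a concrete error, and that error propagates into the uniqueness step. Under the paper's convention, an edge of an odd path is \emph{odd} when its edge-distance from an end-edge is even; so on a path of length $2m+1$ the odd edges occupy positions $1,3,\dots,2m+1$ and form a perfect matching of that path, while the \emph{even} edges occupy positions $2,4,\dots,2m$ and miss both end-vertices. Your claim that ``on each odd path the even edges form a perfect matching of that path'' is therefore false: the set you exhibit (odd edges of $G'$ together with even edges of $G-V'$) leaves every end-vertex of every path of $G-V'$ unsaturated, so it is not a perfect matching. Note also that the obstacle you anticipated, double coverage, is vacuous: edges of $G'$ have both endpoints in $V'$ and edges of $G-V'$ have both endpoints in $V\setminus V'$, so the two families are automatically vertex-disjoint; the real danger is under-coverage, which is exactly what happens. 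The correct perfect matching, and the one the paper constructs, is the union of the per-path perfect matchings, i.e., the odd edges of the paths of $G'$ together with the \emph{odd} edges of the paths of $G-V'$.

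Once existence is repaired, your symmetric-difference argument is essentially correct and is a genuinely different, shorter route than the paper's: the paper builds an $M,M^c$ alternating cycle by an explicit edge-chasing process starting from a leftmost $M\setminus M'$ edge on a path of $G'$, whereas you invoke the standard fact that the symmetric difference of two perfect matchings is a disjoint union of even alternating cycles and feed it directly into \labelcref{defn:nobadcycles}. One caveat remains before this closes. Condition \labelcref{defn:nobadcycles} as printed forbids alternating cycles with respect to the matching ``odd edges of $G'$ plus even edges of $G-V'$,'' which by the parity computation above is not the perfect matching; your cycles alternate with respect to the perfect matching, and such a cycle may pass through a path of $G-V'$, where the two matchings disagree, so it need not be a cycle that the literal reading of \labelcref{defn:nobadcycles} forbids. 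You must therefore read \labelcref{defn:nobadcycles} as referring to the perfect matching itself (equivalently, with ``odd edge'' in place of ``even edge'' on $G-V'$). That is evidently the intended reading---the paper's own proofs of this proposition and of \cref{theorem:skelehask} use it in exactly this way---but your argument needs to say so, since as written both your existence claim and your appeal to \labelcref{defn:nobadcycles} rest on the incorrect parity.
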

\begin{proof}
	Let $G$ be a skeleton graph and let $G'$ be as in \cref{defn:skeleton}. Because the connected components of $G'$ and $G\backslash V'$ are odd paths and each odd path admits a perfect matching, the whole graph $G$ admits a perfect matching (the union of the matchings on each odd path). Call this matching $M$.
	

    It remains to show uniqueness. Suppose towards a contradiction that $G$ contains another perfect matching $M'$. Observe that if $M=M'$ on each of the paths in $G'$ then this forces $M=M'$ on all of $G$. Thus, some path of $G'$ must contain an $M\setminus M'$ edge. Denote some direction along this path as the left, and take the leftmost $M\setminus M'$ edge $e_0$ on this path. We mark the following edges in \cref{fig:badcycle}. Observe that $e_0$ must be rich since $M'$ must contain all $M$ edges left of $e_0$ and $M'$ is perfect. Since $M'$ must saturate the left vertex of $e_0$, it must include another edge incident to this vertex. Observe that there exists an $M^c-M$ alternating path connecting this vertex to another $M\setminus M'$ rich edge elsewhere in $G'$, since if this path terminates in a leaf, $e_0$ would necessarily be in $M'$. Consider the $M\setminus M'$ edge on the other end of this path and call it $e_1$. The vertex of $e_1$ not incident to this path must also be incident to some non-$M$ edge. One of these edges must be in $M'$ since $M'$ is perfect. If it is a $G'^c$ edge, travel along the $M^c-M$ alternating path it is a part of, and if it is a $G'$ edge, travel along this $G'$ edge to the adjacent $M$ edge. Again, we end on an $M\setminus M'$ edge $e_2$. Repeat this process for $e_2$. This process can never end by travelling into some leaf of $G$, since this would imply that one of the $e_i$ is in $M'$. This process can only terminate if some $e_i=e_0$ for some $i>0$. But observe that the cycle we have created has alternating edges in $M$ and the remaining in $M^c$, contradicting \labelcref{defn:nobadcycles}.
\end{proof}
	\begin{figure}[h]
    \centering
    \includegraphics[width=0.75\textwidth]{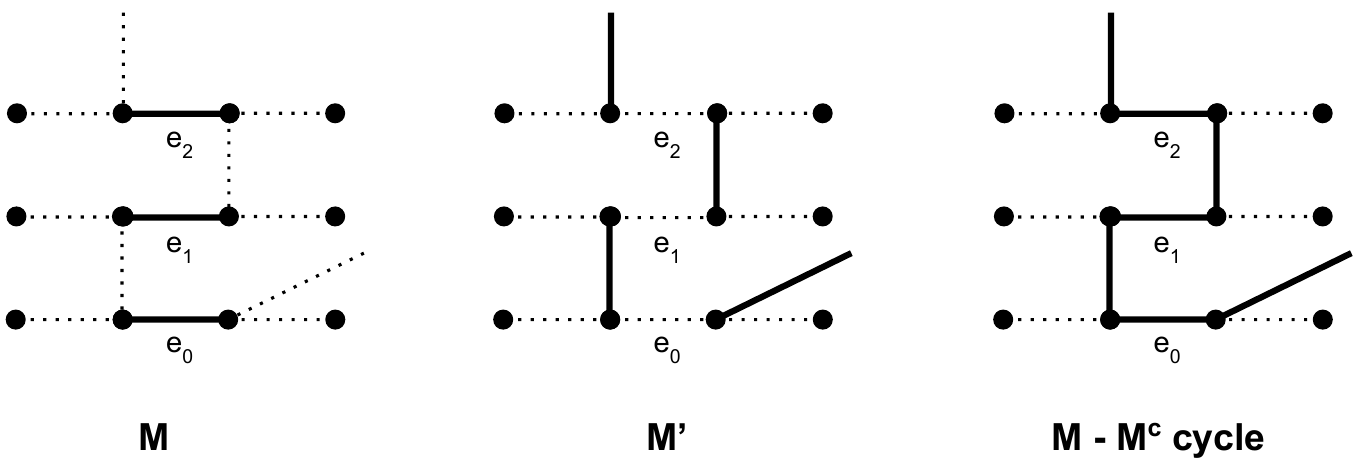}
     \caption{$M$ and $M'$ as in the proof of \cref{prop:uniquem}. $M - H$ paths are not drawn full length.}
    \label{fig:badcycle}
\end{figure}

This unique perfect matching is used in our first main theorem. Our proof uses structural properties derived from the pec parameters of Section 3.
For a pair $(H,H')$ of disjoint matchings, call a vertex $v \in V$ \emph{$(H,H')$-saturated} (or just \emph{saturated} if $(H,H')$ is clear from the context) if $\deg_{G_{H \cup H'}}(v) = \min \set{2, \deg_G(v)}$, where $G_{H \cup H'} \defeq (V, H \cup H')$.

\begin{theorem}\label{theorem:skelehask}
    For a $k$-skeleton $G$, $\nu(G) - \mu(G) = k$.
\end{theorem}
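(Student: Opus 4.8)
The plan is to pass to the PEC parameters and compute $e(G)$ and $e_p(G)$, relying on the identity $\nu(G)-\mu(G)=\frac{e_p(G)-e(G)}{2}$, which is immediate from \cref{nu_via_e,mu_via_e_p}. Since $G$ is a skeleton, it admits the perfect matching $M$ of \cref{prop:uniquem}; hence $\nu(G)=n/2$ and $e(G)=n-2\nu(G)=0$ by \cref{nu_via_e}. Thus it suffices to prove $e_p(G)=2k$, after which $\nu(G)-\mu(G)=e_p(G)/2=k$.

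The main object is the graph $G''$ obtained by deleting the rich edges. By the remark following \cref{defn:skeleton}, $G''$ has degree at most $2$, and being bipartite by \labelcref{defn:skeleton:G''_bipartite} it has no odd cycles, so $G''$ is a pec decomposition of $G$. I would first show $p(G'')=p(G)$. Each $3$-vertex lies in $V'$ (as $V\setminus V'$ has degree at most $2$ by \labelcref{defn:skeleton:G-V'_degree_2}) and is incident to exactly one rich edge, so its $G''$-degree is $2$; vertices of $G$-degree at most $2$ meet no rich edge, since rich edges join two $3$-vertices. Hence $\deg_{G''}(v)=\min\{2,\deg_G(v)\}$ for every $v$, and for any pec decomposition $D$ one has $2|E(D)|=\sum_v\deg_D(v)\le\sum_v\min\{2,\deg_G(v)\}=2|E(G'')|$, so $G''$ has the maximum number of edges. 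By \cref{total_in_pec}, $p(D)=n-|E(D)|$, so $G''$ indeed attains $p(G)$. Condition \labelcref{defn:skeleton:end-vertices-G'} identifies the even-path components of $G''$ as exactly the components of the end-vertices of the maximal paths of $G'$, which are distinct since deleting a rich edge on each maximal path separates its two ends; thus $e(G'')=2k$ and $e_p(G)\le 2k$.

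The crux is the reverse inequality, which I would obtain by showing $G''$ is the \emph{unique} pec decomposition attaining $p(G)$. Let $D$ be any such decomposition. By the edge count above, $D$ is also edge-maximal, forcing $\deg_D(v)=\min\{2,\deg_G(v)\}=\deg_{G''}(v)$ at every vertex. Consequently $D\triangle G''$ has all degrees even and is a disjoint union of cycles, each alternating between edges of $D\setminus G''$ and edges of $G''\setminus D$. Since $E(G)=E(G'')\sqcup\{\text{rich edges}\}$, the edges of $D\setminus G''$ are rich edges. Here I invoke the matching $M$ of \labelcref{defn:nobadcycles}: rich edges are odd edges of $G'$ and so lie in $M$, whence along such a cycle every vertex meets its unique $M$-edge (the incident rich edge), which forces the interleaving $G''$-edges to avoid $M$. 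Each cycle of $D\triangle G''$ is therefore an $M,M^c$ alternating cycle, contradicting \labelcref{defn:nobadcycles} unless $D\triangle G''=\varnothing$, i.e. $D=G''$. Hence $e_p(G)=e(G'')=2k$.

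The principal obstacle is the last step: recognizing the components of $D\triangle G''$ as precisely the $M,M^c$ alternating cycles excluded by \labelcref{defn:nobadcycles}. This hinges on the bookkeeping that $\deg_{G''}(v)=\min\{2,\deg_G(v)\}$ (which both makes $G''$ edge-maximal and pins down the degrees of every competitor $D$) and on the fact that the $2k$ even paths are genuinely distinct, so that the count $e(G'')=2k$ is exact rather than merely an upper bound.
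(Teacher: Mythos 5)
Your overall route is viable and much of it is correct: the reduction $\nu(G)-\mu(G)=\frac{e_p(G)-e(G)}{2}$, the computation $e(G)=0$ from \cref{prop:uniquem}, the degree-sum argument showing $G''$ is edge-maximal among pec decompositions and hence attains $p(G)$, and the uniqueness argument via the symmetric difference $D\triangle G''$ are all sound; indeed, your symmetric-difference version of the uniqueness step is cleaner than the paper's edge-chasing construction of a forbidden cycle (one presentational caveat: even degrees alone do not give vertex-disjoint \emph{alternating} cycles -- you need first that $D\setminus G''$ consists of rich edges, which lie in the matching $M$ of \labelcref{defn:nobadcycles}, so each vertex meets at most one edge of $D\setminus G''$ and hence at most one of $G''\setminus D$; you do invoke this, but only after asserting the decomposition). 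The genuine gap is the step $e(G'')=2k$. Condition \labelcref{defn:skeleton:end-vertices-G'} says that the $G''$-component of each of the $2k$ end-vertices is an even path and that all even-path components arise this way; it does \emph{not} say these components are pairwise distinct. Your justification, ``deleting a rich edge on each maximal path separates its two ends,'' only concerns connectivity inside the path of $G'$ itself and says nothing about end-vertices of two different paths; moreover, even for a single path, its two ends could a priori be rejoined inside $G''$ through the connecting edges and the paths of $G-V'$, forming one even leaf-to-leaf component. If any such coincidence occurred you would only get $e_p(G)=e(G'')<2k$, i.e. $\nu(G)-\mu(G)<k$, and the proof collapses exactly at its final count.

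Distinctness is in fact true, but it requires a parity argument of the same kind the paper uses to build its matchings, not a connectivity remark. For instance: let $H$ consist of the even edges of the paths of $G'$ together with the odd edges of the paths of $G-V'$. Using \labelcref{defn:skeleton:3-vertices,defn:skeleton:G-V'_degree_2,defn:scream-loudly-if-you-find-this} one checks that $H$ is a matching and that every vertex of $G''$-degree $2$ is incident to exactly one $H$-edge, so each component of $G''$ alternates between $H$ and its complement; since the unique edge at an end-vertex of a path of $G'$ is an odd $G'$-edge, hence not in $H$, a component whose two endpoints were both end-vertices would have both end-edges outside $H$ and therefore odd length, contradicting \labelcref{defn:skeleton:end-vertices-G'}. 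It is worth noting that the paper's own proof sidesteps this issue entirely: it constructs $H$ and $H'$ explicitly, obtains $\nu(G)-\mu(G)=|M|-|H|=k$ by counting along the paths of $G'$ (where all of $M\triangle H$ lives), and uses uniqueness of the pec decomposition only to conclude $|H|=\mu(G)$, so it never needs the $2k$ even components to be distinct. Your argument is therefore fixable, but the fix amounts to importing the paper's matching construction rather than the one-line claim you gave.
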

\begin{proof}
    	Let $G'$ be as in \cref{defn:skeleton}. 
	\begin{itemize}
		\item Let $M$ be the unique perfect matching of $G$. 
		
		\item Put in $H$ all the even edges of the maximal odd paths of $G'$ and all the odd edges of maximal odd paths of $G \setminus V'$. 
		
		\item Put in $H'$ all the non-rich odd edges of $G'$, all even edges of the maximal odd paths of $G \setminus V'$, and all edges of $G$ that are not in $G'$ and $G \setminus V'$.
	\end{itemize}
		\begin{figure}[h]
    \centering
    \includegraphics[width=0.75\textwidth]{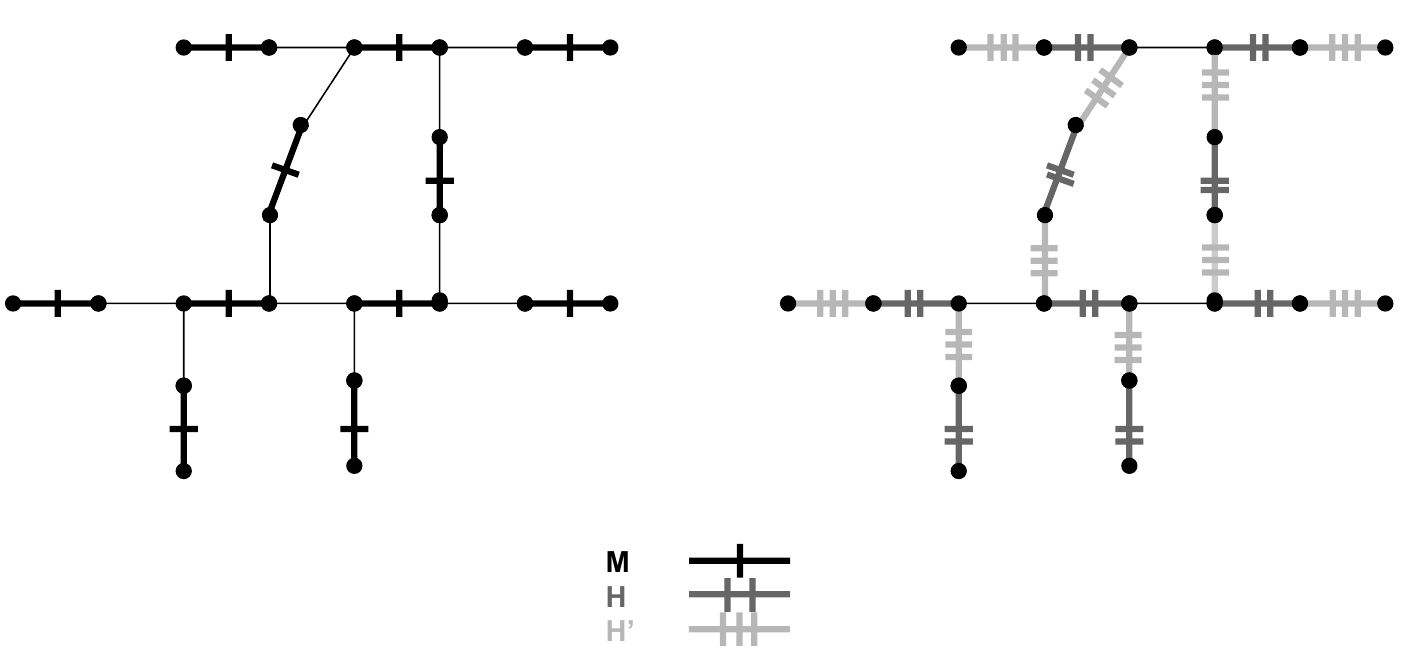}
     \caption{$M$, $H$, and $H'$ as in the proof of \cref{theorem:skelehask}. Depicted is a 2-skeleton. }
    \label{fig:matchings}
\end{figure}
	 These matchings are illustrated in \cref{fig:matchings} on a 2-skeleton. Notice that $M\cup H\cup H'$ covers the edges of $G$ and that $H$ and $H'$ are disjoint. Observe also that all of the $M \setminus H$ and $H \setminus M$ edges must occur in $G'$. Then $|M|-|H|=k$, as there is one more $M$ edge than $H$ edge along each path of $G'$. If we can show that $|H|=\mu(G)$, then we have $\nu(G)-\mu(G)=|M|-|H|=k$. Note that every non-leaf vertex of $G$ is saturated by both $H$ and $H'$, and every leaf is saturated by either $H$ or $H'$. Hence, all vertices are saturated, so $(H,H') \in \Lambda(G)$, and so by \cref{lambda_via_p}, the corresponding pec decomposition achieves $\p(G)$. We will now show that the pec decomposition formed by $H\cup H'$ is the unique pec decomposition achieving $\p(G)$.
	
	
     Suppose that there exists some other pec decomposition $E'\subseteq E$ that achieves $\p(G)$. Every leaf in $G$ must be a leaf of $E'$, and since $E'$ achieves $\p(G)$, these are the only leaves of $E'$. $E'$ must have the same size as $H\cup H'$, and so there must exist an $H\cup H'$ edge not in $E'$. Call this edge $e_0$. We mark the following edges in \cref{fig:pecs}. Each vertex of $e_0$ must be degree 3, since otherwise, the vertex would be a leaf in $E'$. Since $M,H,H'$ is a cover, this vertex must be adjacent to a rich $M$ edge in $E'$. The opposite vertex of this $M$ edge must also be degree three. One of the adjacent edges in $H$ or $H'$ must be in $E'$ to prevent this vertex from being a leaf in $E'$. Call the edge not in $E'$ $e_1$. The other vertex of $e_1$ must be degree 3 to avoid making new leaves. The M rich edge adjacent to this vertex must be adjacent to another two $H$ and $H'$ edges. Of these two, the one not in $E'$ is $e_2$. We may repeat this argument to deduce further $e_i$. This process of following edges can only terminate if some $e_i=e_0$. But then we have followed an $M- M^c$ even alternating cycle, which is forbidden by \labelcref{defn:nobadcycles}. Thus, the only pec decomposition of $G$ is the one formed by $H\cup H'$. 
		\begin{figure}[h]
    \centering
    \includegraphics[width=0.75\textwidth]{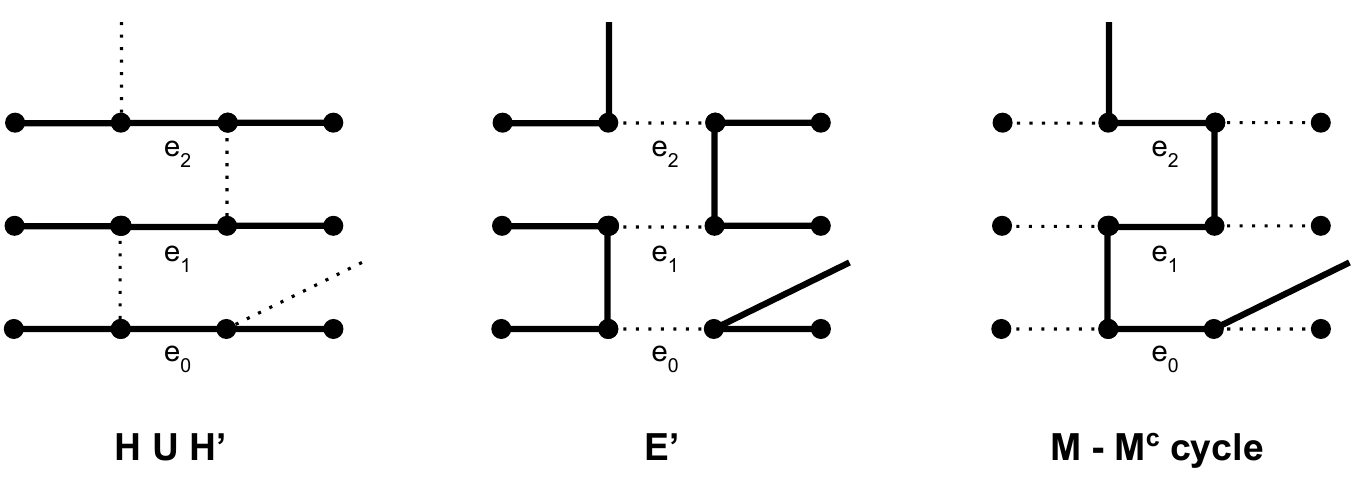}
     \caption{$H\cup H'$ and $E'$ as in the proof of \cref{theorem:skelehask}. $H\cup H'$ paths are not drawn full length.}
    \label{fig:pecs}
\end{figure}

	To complete the proof, we will show that the given $(H,H')$ maximizes the size of $H$ among the pairs in $\Lambda$. Every $(H,H')\in \Lambda$ shares the same pec decomposition, so we need only show that the end-edges of each maximal odd $H - H'$ alternating path are in $H$. Consider some $H'$ edge that is the end-edge of a maximal $H - H'$ alternating path. By definition, this must be an end-edge of a path in $G'$. But then the resulting $H - H'$ alternating path is even length by \labelcref{defn:skeleton:end-vertices-G'}.
\end{proof}

		
		
	
For our second main theorem, we now prove that the converse is true if $G$ is covered by a certain choice of $M,H,H'$.
\begin{theorem}\label{theorem:khasskele}
	Let $G$ be a connected graph with $\dfrac{\mu(G)}{\nu(G)} < 1$ and suppose that any (equivalently, some) maximally intersecting $M,H,H'$ cover all of $E$, i.e., $E = M \cup H \cup H'$, then $G$ is a skeleton. In particular, $G$ is a $(\nu(G)-\mu(G))$-skeleton.
\end{theorem}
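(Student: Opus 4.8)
The plan is to build the distinguished subgraph $G'$ directly out of the $M$-$H$ alternating structure. Set $k := \nu(G) - \mu(G) = |M| - |H|$ and let $G'$ be the union of the maximal $M$-$H$ alternating paths of positive length; since each such path is odd with end-edges in $M$ (\labelcref{MH-chains_odd}) it carries exactly one more $M$-edge than $H$-edge, and summing over the components of $M \triangle H$ shows there are exactly $k$ of them, so $G'$ has $k$ components and $G$ will be a $k$-skeleton once the conditions are verified. Several conditions are then immediate. For \labelcref{defn:anush-didnt-label-this-lol}: the paths are odd (\labelcref{MH-chains_odd}), vertex-disjoint (they are the components of $M \triangle H$), and leaf-to-leaf, since by \labelcref{MH-endedges_H'} an end-edge lies in $M \cap H'$ and then its terminal vertex can carry no further edge of $M$, $H$, or $H'$. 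That $G$ has degree $\le 3$ is automatic from $E = M \cup H \cup H'$. For the length bound and condition \labelcref{defn:so-sam-labels-it-instead} I would use connectivity together with $\mu(G) < \nu(G)$: a path of length $1$ or $3$, or one with no $3$-vertex, would have all its vertices of the expected degree and hence be an entire connected component, forcing $G$ to be a short odd path with $\mu(G) = \nu(G)$. Declaring the rich edges to be the interior $M$-edges lying outside $H'$, conditions \labelcref{defn:skeleton:3-vertices} and \labelcref{defn:skeleton:G-V'_degree_2} follow from \labelcref{MH-vertices_incident_H'}: an interior $M$-edge has both endpoints of degree $3$ exactly when it avoids $H'$, and a vertex off $V'$ carries at most one edge of $M \cap H$ and one of $H'$.

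The key observation is that deleting the rich edges yields $G'' = G_{H \cup H'}$: the rich edges are exactly $M \setminus (H \cup H')$, so removing them from $E = M \cup H \cup H'$ leaves precisely $H \cup H'$. Since $(H,H') \in \Lambda(G)$, \cref{lambda_via_p} makes $G''$ a pec decomposition, whose components are paths and even cycles; in particular it is bipartite, giving condition \labelcref{defn:skeleton:G''_bipartite} for free. For condition \labelcref{defn:skeleton:end-vertices-G'} I would prove both inclusions: the $G''$-component of a $G'$-endpoint is even by \labelcref{MH-endedges_HH'-even}; conversely an even $H$-$H'$ path has its $H$-end-edge in $M$ (\labelcref{HH'-end-H-edges_M}), while its $H'$-end-vertex is not an inner vertex of any $M$-$H$ path (\labelcref{HH'-end-vertices}), hence meets no rich edge, hence is a genuine leaf whose unique edge lies in $M \cap H' \subseteq M \setminus H$, making it a $G'$-endpoint.

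The remaining conditions \labelcref{defn:scream-loudly-if-you-find-this} and \labelcref{defn:nobadcycles}, as well as the last step above, all secretly require that $M$ be a \emph{perfect} matching, and I expect establishing this to be the crux. An unsaturated vertex must be a leaf in $V \setminus V'$ whose unique edge is in $H'$, and I would trace the maximal simple $M$-alternating path out of it. Because $M$ is maximum it cannot end at a second unsaturated vertex (no augmenting path), and the ``whole-component'' dichotomy rules out its closing off inside $G - V'$ (that would make $G$ a short even path or cycle with $\mu(G) = \nu(G)$); hence it must cross into $V'$ along an $H'$ edge and continue through a rich $M$-edge. Flipping $M$ along the prefix ending at that rich edge keeps $|M|$ maximum but strictly increases $|M \cap (H \cup H')|$, contradicting \labelcref{conditions:M_intersect_HH'}. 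Thus $M$ is perfect.

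With $M$ perfect, the remaining conditions close. For \labelcref{defn:scream-loudly-if-you-find-this} I identify $G - V'$ with $G''[V \setminus V']$: it has no cycles (a cycle would alternate $M \cap H$ / $H'$, have all vertices of degree $2$ and off $V'$, hence be a full even-cycle component forcing $\mu(G) = \nu(G)$), and its path end-edges all lie in $M \cap H$ (by \labelcref{HH'-end-H-edges_M} at true $G''$-endpoints, and because every edge crossing from $V \setminus V'$ to $V'$ lies in $H'$ at the cut points), so each component alternates $M \cap H$ / $H'$ with both ends in $M \cap H$ and is odd. Finally, condition \labelcref{defn:nobadcycles} uses the same flip: for an $M$-$M^c$ alternating cycle $C$, all non-$M$ edges of $C$ lie in $H \cup H'$, so a rich edge on $C$ would make $M \triangle C$ increase $|M \cap (H \cup H')|$, contradicting \labelcref{conditions:M_intersect_HH'}, while a rich-edge-free such cycle lies in $V \setminus V'$ and is again a full component forcing $\mu(G) = \nu(G)$. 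The count $k = \nu(G) - \mu(G)$ then identifies $G$ as a $(\nu(G) - \mu(G))$-skeleton.
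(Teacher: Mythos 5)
Your proposal is correct and takes essentially the same route as the paper's proof: the same choice of $G'$ (the union of the nontrivial maximal $M$-$H$ alternating paths), the same Section 4 lemmas, and the same intersection-increasing flips of $M$ toward rich edges. If anything, your write-up is more complete in two spots where the paper is terse: you isolate and prove explicitly that $M$ must be perfect before verifying \labelcref{defn:scream-loudly-if-you-find-this} and finishing \labelcref{defn:skeleton:end-vertices-G'}, and for \labelcref{defn:nobadcycles} you treat the rich-edge-free case (a cycle inside $G-V'$, which by connectivity forces $G$ to be an even cycle with $\mu(G)=\nu(G)$), whereas the paper simply asserts that any such alternating cycle must contain an $M\setminus(H\cup H')$ edge.
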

\begin{proof}
	Since each vertex is incident to each of $M, H,$ and $H'$, and since $E=M\cup H\cup H'$, $G$ is a graph of degree 3. We will show that taking $G'$ to be the set of $M\cup H$ edges in a nontrivial $M-H$ alternating path satisfies conditions \labelcref{defn:anush-didnt-label-this-lol}-\labelcref{defn:nobadcycles}.
\medskip
	
\noindent\labelcref{defn:anush-didnt-label-this-lol}: Paths being odd length follows from \labelcref{MH-chains_odd}. If the length of some path were exactly three, then this path cannot be connected to the rest of the graph since the end vertices cannot be incident to $H$ and $E=M\cup H \cup H'$.

\medskip

\noindent\labelcref{defn:skeleton:3-vertices}: The internal $M$ edges of each path must either be in $H'$ or have vertices adjacent to $H'$ by \labelcref{MH-vertices_incident_H'}.

\medskip

\noindent\labelcref{defn:so-sam-labels-it-instead}:
If instead some path does not contain a 3-vertex, then every $M$ edge on this path is an $M\cap H'$ edge. Swapping $H$ and $H'$ along this path will keep the same $|M\cap(H\cup H')|$ but increase $|M\cap H|$, a contradiction.

\medskip

\noindent\labelcref{defn:skeleton:G-V'_degree_2}:
The vertices in $V\setminus V'$ certainly have $G-$degree at most $2$. But if some $V\setminus V'$ edge is incident to $M$, this $M$ edge must also be in $H$, as otherwise, the $M$ edge is part of some maximal $M-H$ alternating path in $G'$.

\medskip

\noindent\labelcref{defn:scream-loudly-if-you-find-this}:
The only $G\setminus V'$ edges that can be incident to $G'$ are $H'$ edges. Observe that any $M$ edge not in $G\setminus V'$ must be an $M\cap H$ edge or else it would be part of a maximal $M-H$ alternating path of length at least 3 and would be in $G'$. $G\setminus V'$ must then be degree 2. Observe also that any $H'$ edge incident to $G'$ on some side must be incident to $M$ on the other, or else the $M$ edge adjacent in $G'$ could be moved onto the $H'$ edge to increase $M\cap(H\cup H')$. Similarly, for any $H'$ edge not in $G'$, there must exist a $H'-(M\cap H)$ alternating path connecting it to $G'$. Then this $H'$ edge must be incident to $M$ on its other side, or else we could shift $M$ down this path, including the $M$ edge in $G'$, and increase intersection. $G\setminus V'$ is acyclic since there must be some path connecting these vertices to $G'$ and $G\setminus G'$ is degree 2. The connected components of $G\setminus V'$ must then be odd $M-H'$ alternating paths beginning and ending in $M$.
\medskip

\noindent\labelcref{defn:skeleton:end-vertices-G'}:
The rich edges with this $G'$ are exactly the $M\setminus (H'\cup H)$ edges. Consider the maximal $H-H'$ alternating path going through any such end-vertex. This path is even by \labelcref{MH-endedges_HH'-even}. All edges in this path are in $G''$. The connected component of $G''$ containing this path is the path itself since the internal vertices of this path cannot be incident to any edges in $G\setminus G''$ except for at most a single $M\setminus (H\cup H')$ edge. To show these are the only even paths in $G''$, observe that every edge in $G''$ must be in either $H$ or $H'$ since $E=M\cup H\cup H'$ and $G''$ contains no $M\setminus (H\cup H')$ edges. Then in fact every connected component of $G''$ is an $H-H'$ alternating path or cycle. Even length path implies some end is in $H'$. But these can only be the endpoints of the $G'$ paths that are $M\cap H'$ edges, since every other $H'$ edge must be incident to $H$ on both sides.
\medskip

\noindent\labelcref{defn:skeleton:G''_bipartite}:
By above, every connected component must be an $H-H'$ alternating path or cycle. $G''$ can thus have no odd cycles, so it is bipartite.
\medskip

\noindent\labelcref{defn:nobadcycles}:
Suppose towards a contradiction that there was such an $M-M^c$ alternating cycle $C$. Note that $M^c=(H\cup H')$. But if we moved the $M$ edges of $C$ onto the $M^c$ edges of $C$ we would increase $|M\cap(H\cup H')|$ since $C$ must contain some $M\setminus (H\cup H')$ edge.\medskip

\noindent Note that, following \labelcref{MH-chains_odd}, every connected component of $G'$ is an odd $M-H$ alternating path with end edges in $M$, and that by construction, every $(M\setminus H)\cup(H\setminus M)$ edge appears in $G'$. Therefore, $k=\#$(paths in $G'$)$ =\nu(G)-\mu(G)$, since there is exactly one more $M$ edge than $H$ edge along each of these paths.
\end{proof}

\section{Possible edges in $E \setminus (M \cup H \cup H')$}\label{sec:extra-edges}

Characterizing the graphs with ratio less than 1 and covered by our special choice of matchings was possible due to the restrictions on the configurations in which $M$, $H$, and $H'$ edges can appear along alternating chains. The edges not appearing in these maximally intersecting matchings do not contribute to the overall structure of graphs with $\frac{\mu(G)}{\nu(G)}<1$. In this section, we demonstrate that these remaining edges can only appear in specific configurations. There are many questions still open for the edges not in any of the maximally intersecting matchings. Fully understanding the ways in which these unmatched edges can appear in a graph would lead to a full characterization of graphs with $\frac{\mu(G)}{\nu(G)}<1$. 

\smallskip

Throughout, let $G$ be an arbitrary finite connected graph.

\begin{lemma}\label{unsaturated_vertices_not_adjacent} 
	For any $(H,H') \in \Lambda_\mu(G)$, no two $(H,H')$-unsaturated vertices are adjacent (in $G$), unless they are the end-vertices of the same even maximal $H$-$H'$ alternating path.
\end{lemma}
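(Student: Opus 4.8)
The plan is to fix an edge $e = uv$ joining two unsaturated vertices and derive a contradiction with the maximality built into $(H,H') \in \Lambda_\mu(G)$, the only escape being the asserted exception. Throughout I will work with the spanning subgraph $G_{H \cup H'}$, whose components are paths and even cycles (it is a pec decomposition by \cref{lambda_via_p}). The first step is a dictionary for ``unsaturated'': a vertex of $G$-degree at least $2$ is unsaturated exactly when it is incident to at most one edge of $H \cup H'$, i.e.\ it is an endpoint of a path component or isolated in $G_{H \cup H'}$; a leaf is unsaturated exactly when it is isolated in $G_{H \cup H'}$. I then split into cases according to whether $e \in H \cup H'$.

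The main case is $e \notin H \cup H'$. Here I first claim that no one of $H, H'$ avoids both $u$ and $v$: if, say, neither $u$ nor $v$ met an $H$-edge, then $H \cup \{e\}$ would still be a matching disjoint from $H'$, giving a union of size $\lambda(G) + 1$ and contradicting the definition of $\lambda$. Combined with the dictionary, this forces both $u$ and $v$ to be endpoints of path components, each incident to exactly one edge of $H \cup H'$, and these two edges must lie in \emph{different} matchings (if both lay in $H$, neither endpoint would meet $H'$ and we could enlarge $H'$ by $e$). Say $u$'s end-edge is in $H$ and $v$'s is in $H'$. If $u$ and $v$ are endpoints of different maximal $H$-$H'$ alternating paths $P_u \ne P_v$, then I swap the roles of $H$ and $H'$ along $P_u$ (this preserves disjointness and $|H| + |H'|$ since $P_u$ is a whole component), after which $u$'s end-edge lies in $H'$; now $e$ sits between two $H'$-edges, so adding it to $H$ merges $P_u$ and $P_v$ into a single properly alternating path and again produces a union of size $\lambda(G) + 1$, a contradiction. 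The only way to avoid this is $P_u = P_v$, i.e.\ $u$ and $v$ are the two endpoints of one maximal $H$-$H'$ alternating path; since its two end-edges lie in different matchings, this path is even. This is precisely the stated exception.

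The delicate case is $e \in H \cup H'$, say $e \in H$. Then both $u$ and $v$ are incident to $e$, and unsaturation forces them to meet no other edge of $H \cup H'$ (a leaf incident to $e$ would already be saturated, so both have $G$-degree at least $2$); thus $\{e\}$ is an isolated edge-component and $u, v$ are the two endpoints of the \emph{odd} length-$1$ alternating path $\{e\}$. Choosing neighbours $w \ne v$ of $u$ and $x \ne u$ of $v$ through edges outside $H \cup H'$, one checks using $\lambda$-maximality --- first directly, and then after relocating $e$ from $H$ into $H'$ --- that $w$ and $x$ must each meet both $H$ and $H'$; the intended finish is to build an $(H \cup H')$-augmenting path through $w - u - v - x$ and contradict $\lambda(G)$.

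I expect this last case to be the main obstacle. The augmenting argument must be run globally: local rerouting around $w$ and $x$ preserves both the number of paths and the number of even paths of $G_{H \cup H'}$, so by itself it yields no contradiction. Moreover the configuration of an isolated matched edge with both endpoints of $G$-degree at least $2$ realizes an \emph{odd} common alternating path, sitting just outside the even-path exception. This suggests the statement is really aimed at adjacencies through edges \emph{not} in $H \cup H'$ --- the setting of the present section on edges in $E \setminus (M \cup H \cup H')$ --- so that the matched case $e \in H \cup H'$ is either outside the intended scope or must be excluded by a separate argument; pinning down this boundary is the crux.
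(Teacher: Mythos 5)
Your handling of the case $e = (u,v) \notin H \cup H'$ is correct and is essentially the paper's own argument: rule out any one of $H,H'$ missing both endpoints by a direct augmentation, and otherwise swap $H$ and $H'$ along one of the two maximal alternating paths (you swap on $P_u$, the paper on $P_v$) to set up an augmentation unless $P_u = P_v$, which forces the even-path exception. On this case the two proofs coincide.

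The case you flag as the crux, $e \in H \cup H'$, is not a gap in your argument but a gap in the statement and in the paper's proof, which silently assumes $e \notin H \cup H'$: every move it makes (adding $(u,v)$ to $H$ or to $H'$) would violate the disjointness of $H$ and $H'$ when $(u,v)$ already lies in one of them. In fact the lemma as literally stated fails in that case, so no argument could close it. Take $G$ to be the tree on vertices $a,b,c,d,u,v,w,x$ with edges $aw$, $bw$, $wu$, $uv$, $vx$, $xc$, $xd$, and put $H = \{aw, uv, xc\}$, $H' = \{bw, xd\}$. Since $w$ and $x$ are nonadjacent $3$-vertices, any union of two disjoint matchings omits at least one edge at each of them, so $\lambda(G) = 5$ and $(H,H') \in \Lambda(G)$; since $a$ and $b$ are leaves sharing the neighbor $w$, there is no perfect matching, so $\nu(G) = 3 = |H|$, hence $\mu(G)=3$ and $(H,H') \in \Lambda_\mu(G)$. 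Now $u$ and $v$ are adjacent and both unsaturated (each has $G$-degree $2$ but degree $1$ in $G_{H \cup H'}$), yet the maximal $H$-$H'$ alternating path containing them is the single edge $uv$, which is odd, not even. So your diagnosis is exactly right: the lemma holds only for adjacency through edges of $E \setminus (H \cup H')$ --- its intended scope, given that this section concerns edges outside $M \cup H \cup H'$ --- or else the exception must be broadened to admit the two endpoints of an isolated edge of $H \cup H'$ (a common odd maximal alternating path of length $1$). Your partial analysis of the matched case (each further neighbor of $u$ and of $v$ must meet both $H$ and $H'$) is correct as far as it goes, and the example above shows why it cannot be pushed to a contradiction: that configuration is genuinely realizable within $\Lambda_\mu$.
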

\begin{proof}
	Let $u,v \in V$ be unsaturated and assume $(u,v) \in E$. If both $u$ and $v$ are not incident to $H$ (resp. $H'$), then adding $(u,v)$ to $H'$ (resp. $H$) increases $H \cup H'$, a contradiction. Thus, say $u$ is incident to $H$ but not $H'$ and $v$ is incident to $H'$ but not $H$. Then both $u$ and $v$ are end-vertices of maximal $H$-$H'$ alternating paths $P_u$ and $P_v$. If these paths are different, then we can switch the $H$ and $H'$ on $P_v$, making $v$ incident to $H$ and reducing to the previous case, i.e. adding $(u,v)$ to $H$ and increasing $H \cup H'$.
\end{proof}

\begin{lemma}\label{even_end-vertex_not_adjacent_odd_path_or_odd vertex} 
	For any $(H,H') \in \Lambda_\mu(G)$, no end-vertex $u$ of an even maximal $H$-$H'$ alternating path is adjacent to any inner vertex of an odd maximal $H$-$H'$ alternating path or any inner odd vertex of an even maximal $H$-$H'$ alternating path.
\end{lemma}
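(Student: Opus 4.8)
The plan is to argue by contradiction, in the spirit of \cref{unsaturated_vertices_not_adjacent}: assuming the forbidden adjacency, I would modify $(H,H')$ into a new pair of disjoint matchings that either enlarges $H \cup H'$ (contradicting $(H,H') \in \Lambda(G)$ via \cref{lambda_via_p}) or keeps $|H\cup H'| = \lambda(G)$ while strictly decreasing the number of even components of the resulting pec decomposition, contradicting $|H| = \mu(G)$ via \cref{mu_via_e_p}. The relevant bookkeeping is that an end-vertex $u$ of an even maximal $H$-$H'$ alternating path is incident to exactly one of $H,H'$, that the offending edge $f \defeq (u,w)$ lies outside $H \cup H'$ (otherwise $w$ would be the path-neighbor of $u$, not a genuine new adjacency), and that, by \labelcref{HH'-odd-paths_begin_H}, every odd maximal $H$-$H'$ path has both end-edges in $H$; the latter pins down, along each alternating path, which incident edge of $w$ is in $H$ and which is in $H'$.

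First I would record the basic local move. Since $u$ is incident to, say, $H$ but not $H'$, the natural attempt is to place $f$ in $H'$; but $w$ is an interior vertex, hence already incident to an $H'$ edge, so I must delete that edge to restore a pec decomposition. Introducing a coordinate $w = w_j$ along its path, I would read off from the parity of $j$ exactly which edge at $w$ is deleted and what the new components are, and then ask whether re-alternating the severed tail lets me absorb the even path through $u$ into a longer component of the opposite parity.

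The two cases of the statement are then treated separately, and this is where the parity hypothesis enters. In the first case, $w$ is an interior vertex of an odd $H$-$H'$ path $Q$; here one exploits that both end-edges of $Q$ lie in $H$, routing $u$'s even path through $w$ and re-alternating the severed tail of $Q$ toward its far end-vertex, with the goal of merging an even component and a tail of $Q$ into a single odd path. In the second case, $w$ is an \emph{odd} interior vertex of an even $H$-$H'$ path $P'$ (possibly $P' = P$); the oddness of $w$ is precisely what makes the cut-and-reroute close an even cycle and leave the residual segments with the parities needed to remove an even path rather than produce one — an even interior vertex would instead preserve the count. In either case I would invoke \cref{unsaturated_vertices_not_adjacent} to discard the degenerate sub-configurations in which the vertex newly exposed by the deletion is itself interior and adjacent to $u$.

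The hard part will be verifying that a \emph{strict} improvement actually occurs: the naive add-one-edge/delete-one-edge move is edge-count preserving, so it can neither lower the number of path components nor, by the parity invariant $e(G') \equiv n \pmod 2$, change the number of even paths by an odd amount. The real content is therefore to show that the reroute forced by $w$ being an odd interior vertex drives the even-component count down by the full amount required to contradict the minimality in \cref{mu_via_e_p}, and that no new even path is silently created elsewhere. Getting the direction of each $H$-$H'$ swap correct, and confirming that the modified pair remains in $\Lambda(G)$ rather than merely being some pec decomposition, is the most delicate step and is where the distinction between an odd and an even interior vertex is indispensable.
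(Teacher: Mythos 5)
Your outline is essentially the paper's own argument: after the WLOG swap on $P$, add the offending edge $(u,w)$ to the matching that misses $u$, delete that matching's edge of $w$'s path at $w$, and contradict \cref{mu_via_e_p} (the pec decomposition of a $\Lambda_\mu$-pair attains both $p(G)$ and $e_p(G)$). For $w$ an odd inner vertex of an even path $Q\neq P$ your bookkeeping is exactly right and matches the paper: the two even paths $P,Q$ are replaced by two odd paths, the path count is unchanged, the even-path count drops by $2$, and \cref{mu_via_e_p} is contradicted.

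The gap is precisely at the step you flag as ``the hard part'', in the odd-path case: there the strict improvement does not occur, under either orientation of the exchange. Suppose $Q$ is odd, so both of its end-edges lie in $H$ (as you note via \labelcref{HH'-odd-paths_begin_H}; for a general pair in $\Lambda_\mu(G)$ this follows by swapping $H$ and $H'$ on $Q$, which would force $|H|>\mu(G)$). In your orientation ($u$ incident to $H$ only, $(u,w)$ placed in $H'$, the $H'$-edge of $Q$ at $w$ deleted), the piece of $Q$ that keeps $w$ is bounded by an end-edge of $Q$ (in $H$) and by the surviving edge at $w$ (in $H$), hence is odd; so the merged component through $u$ has length even${}+1+{}$odd${}={}$even, while the severed tail has odd length. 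The parities $\{$even, odd$\}$ of $\{P,Q\}$ are simply reproduced, so neither $p$ nor $e$ changes and \cref{mu_via_e_p} yields nothing; the paper's orientation symmetrically produces $\{$odd, even$\}$ --- again no change. Your proposed repair, re-alternating the severed tail, cannot help: after the exchange the tail is an entire component of the new $G_{H\cup H'}$, so swapping $H$ and $H'$ along it changes no lengths and merges nothing; it only lowers $|H|$, which contradicts nothing. This is also exactly where the paper's proof errs: its assertion that the severed piece $q'$ is an odd path is true when $Q$ is even and $w$ is an odd inner vertex, but false when $Q$ is odd. Moreover, no argument can close this case, because that half of the statement is false: let $G$ be the tree obtained by gluing the paths $x$--$y$--$u$--$c$, $a$--$c$, $b_2$--$b_1$--$c$ at $c$, and take $H=\{(x,y),(c,a),(b_1,b_2)\}$, $H'=\{(y,u),(c,b_1)\}$. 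Then $\lambda(G)=5$ and $\mu(G)=3=|H|$, so $(H,H')\in\Lambda_\mu(G)$, yet the end-vertex $u$ of the even maximal path $x$--$y$--$u$ is adjacent to $c$, an inner vertex of the odd maximal path $a$--$c$--$b_1$--$b_2$. The sub-case $P=Q$ that you allow in the even case fails for the same parity reason (the exchange re-forms an even path, or closes an even cycle while leaving an even residual path; the even $4$-path with a chord from an end-vertex to its middle vertex, colored alternately, is a counterexample in $\Lambda_\mu$). So what your plan, like the paper's proof, actually establishes is only the case of an even path $Q\neq P$.
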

\begin{proof}
	Let $p$ be an even maximal $H$-$H'$ alternating path, $u$ be an end-vertex of $p$, and $v$ be an inner vertex of an odd maximal $H$-$H'$ alternating path or an inner odd vertex of an even maximal $H$-$H'$ alternating path. Without loss of generality, by swapping $H$ and $H'$ on $p$, we may assume that $u$ is incident to $H'$.
	
	If $u$ and $v$ were adjacent, we could remove the $H$ edge on $q$ incident to $v$ and instead add $(u,v)$ to $H$, thus replacing both $p$ and $q$ with new chains $p'$ and $q'$. Note that $q'$ is an odd path. If $p=q$, then $p'$ is an even cycle; otherwise $p'$ is an odd path. Either way, the number of paths in $G_{H \cup H'}$ doesn't increase, while the number of even paths definitely decreases, contradicting that $G_{H \cup H'}$ achieves $\p(G)$ and $\epp(G)$.
\end{proof}

Perhaps the task of characterizing all unsaturated graphs with ratio $1$ in general is difficult and out of reach at the moment, but one can look at subclasses of graphs and try to find a characterization among those. We do not even know whether the following graphs have ratio $1$.

\begin{defn}
A graph is called \textit{tetris} if it is a finite connected subgraph of the grid $\Z^2$ with no leaves or cutvertices\footnote{A \textit{cutvertex} in a graph is a vertex whose removal increases the number of connected components.}.
\end{defn}

The authors of \cite{IGL:2019} have attempted to prove that tetris graphs have ratio $1$, but unsuccessfully. We state it here as a question.

\begin{question}\label{q:tetris}
Is $\frac{\mu(G)}{\nu(G)} = 1$ for all tetris graphs $G$?
\end{question}




\bibliographystyle{ieeetr}
\bibliography{references}\vspace{0.75in}
\end{document}